\documentclass[preprint,12pt]{elsarticle}
\biboptions{sort&compress}

\usepackage{amsmath,amssymb,amsthm,mathtools}
\usepackage{enumitem}
\usepackage[
  a4paper,
  left=2.4cm,
  right=2.4cm,
  top=2.3cm,
  bottom=2.3cm
]{geometry}
\usepackage[colorlinks=true,linkcolor=blue,citecolor=blue,urlcolor=blue]{hyperref}

\makeatletter
\renewenvironment{keyword}{%
  \def\sep{\unskip, }%
  \def\MSC{\@ifnextchar[{\@MSC}{\@MSC[2000]}}%
  \def\@MSC[##1]{\par\medskip\noindent
    \textbf{AMS Subject Classification (##1): }\ignorespaces}%
  \global\setbox\keybox=\vbox\bgroup\hsize=\textwidth
  \normalsize\normalfont\parskip\z@
  \noindent\textbf{Key Words: }\raggedright\ignorespaces
}{\par\egroup}
\makeatother

\makeatletter
\def\ps@pprintTitle{%
  \let\@oddhead\@empty
  \let\@evenhead\@empty
  \let\@oddfoot\@empty
  \let\@evenfoot\@empty
}
\makeatother

\allowdisplaybreaks[2]
\setlist{nosep,leftmargin=2em}

\newtheorem{theorem}{Theorem}[section]
\newtheorem{lemma}[theorem]{Lemma}
\newtheorem{corollary}[theorem]{Corollary}

\newtheorem{problem}{Problem}
\newtheorem{definition}{Definition}

\newcommand{\spex}{\operatorname{spex}_{\mathcal{OP}}}

\newcommand{\HOP}{H_{\mathcal{OP}}}
\newcommand{\HOPtwo}{\widetilde{H}_{\mathcal{OP}}}

\begin{document}

\begin{frontmatter}

\title{\vspace*{-2cm}\LARGE Extremal spectral result of outerplanar graphs without $P_{3\cdot l}$ \vspace{1.2cm}}

\author{{\large Fulong Ye$^{a,b,c}$, Yuxiang Liu$^{a,c}$, Ligong Wang$^{a,c,\dagger}$}\\[0.8em]
		{\small $^a$School of Mathematics and Statistics, Northwestern
			Polytechnical University,}\\ {\small  Xi'an, Shaanxi 710129,
			P.R. China.}\\
        {\small $^b$School of Mathematics and Statistics, Qinghai Minzu University,}\\ {\small  Xining, Qinghai 810007,
			P.R. China.}\\
		{\small $^c$Xi'an-Budapest Joint Research Center for Combinatorics, Northwestern
			Polytechnical University,}\\
		{\small Xi'an, Shaanxi 710129,
			P.R. China. }\\
		{\small E-mail: yefl\_qhmu@163.com, yxliumath@163.com, lgwangmath@163.com} }

\nonumnote{\textsuperscript{$*$}Supported by the National Natural Science
Foundation of China (No.~12271439).}
\nonumnote{\textsuperscript{$\dagger$}Corresponding author.}

\begin{abstract}
A graph $G$ is $F$-free if it does not contain $F$ as a subgraph. Let $\spex(n,F)$ be the maximum spectral radius over all $n$-vertex $F$-free outerplanar graphs. For integers $t\geq1$ and $l\geq2$, let $P_{t\cdot l}$ be the starlike tree with $t$ branches of length $l-1$. For sufficiently large $n$, Yin, Li, and Meng  [arXiv:2504.04364v1] characterized the unique extremal graph for $\spex(n,P_{t\cdot l})$ when $t=1$, $t=2$, or $t\geq4$. They left the case $t=3$ open and proposed a natural candidate for the extremal graph. We show that this candidate is not extremal and determine the unique extremal graph for $\spex(n,P_{3\cdot l})$. For every $l\geq3$ and all sufficiently large $n$, this unique extremal graph is
$
K_1\vee\bigl(2P_{2l-3}\cup qP_{l-2}\cup P_r\bigr),
$
where $q$ and $r$ are integers satisfying
$
n=2(2l-3)+q(l-2)+r+1,$ $q\geq0,$ $0\leq r<l-2.
$
\end{abstract}

\begin{keyword}
Spectral radius \sep Outerplanar graphs \sep Paths \sep Starlike trees
\MSC[2020] 05C50 \sep 05C35
\end{keyword}

\end{frontmatter}

\section{Introduction}
Throughout this paper, all graphs are finite, simple, and undirected. For a graph $G$, let $V(G)$ and $E(G)$ denote its vertex set and edge set, respectively. The order and size of $G$ are $|V(G)|$ and $e(G)=|E(G)|$, respectively. For a vertex $v\in V(G)$, let $N_G(v)$ denote the neighborhood of $v$, the degree of $v$ is  defined as $d_G(v)=|N_G(v)|$. Let $A(G)$ denote the adjacency matrix of $G$. The largest eigenvalue of $A(G)$, denoted by $\rho(G)$, is called the spectral radius of $G$.
For two vertex-disjoint graphs $G$ and $H$, let $G\cup H$ denote their union. The join $G\vee H$ is obtained from $G\cup H$ by adding all edges between $V(G)$ and $V(H)$.
For a positive integer $k$, let $kG$ denote the union of $k$ disjoint copies of $G$.
As usual, $P_n$, $C_n$ and $K_n$ denote the path, cycle and complete graph of order $n$, respectively.
A linear forest $F(l_1,l_2,\ldots,l_t)$ is the union of $t$
vertex-disjoint paths $P_{l_1},P_{l_2},\ldots,P_{l_t}$, where
$F(l_1,l_2,\ldots,l_t)=\bigcup_{i=1}^tP_{l_i}$ and
$l_1\geq l_2\geq\cdots\geq l_t\geq1$.
For a vertex subset $S\subseteq V(G)$, let $G[S]$ denote the subgraph of $G$ induced by $S$. We write
$G-S=G[V(G)\setminus S].$
When $S=\{v\}$, we simply write $G-v$ instead of $G-\{v\}$.
For $u\in V(G)$ and $v\notin V(G)$, let $G+uv$ denote the graph obtained
from $G$ by adding the new vertex $v$ and the edge $uv$.
A graph is planar if it can be drawn in the plane without edge crossings.
A planar graph is outerplanar if it has a plane drawing in which all vertices
lie on the boundary of the outer face.
We refer to Bondy and Murty~\cite{BondyMurty} for standard notation and terminology not defined here.

A graph is $F$-free if it does not contain $F$ as a subgraph. A basic problem in spectral graph theory is to determine the
maximum spectral radius in a given graph class and to characterize the graphs
attaining it. This problem is known as the Brualdi--Solheid
problem~\cite{BrualdiSolheid}. The same question has been studied for
$F$-free graphs and for many choices of $F$, including complete
graphs~\cite{BollobasNikiforov}, matchings~\cite{FengYuZhang}, paths and
cycles~\cite{NikiforovPaths,ZhaiLin}, friendship
graphs~\cite{CioabaFengTaitZhang}, wheels~\cite{ZhaoHuangLin}, star
forests~\cite{ChenLiuZhangStar}, and linear
forests~\cite{ChenLiuZhangLinear}. For surveys and further results concerning
graph spectra, see~\cite{ChenZhangSurvey,LiLiuFengSurvey,NikiforovSurvey}.

Spectral extremal questions have also been studied for planar and outerplanar
graphs. In 1978, Schwenk and Wilson~\cite{SchwenkWilson} asked what could be
said about the eigenvalues of a planar graph. Boots and
Royle~\cite{BootsRoyle}, and independently Cao and Vince~\cite{CaoVince},
later conjectured that $K_2\vee P_{n-2}$ has the maximum spectral radius among
all planar graphs of order $n\geq 9$. In 2017, Tait and
Tobin~\cite{TaitTobin} proved this conjecture for sufficiently large $n$.
In 1990, Cvetkovi\'{c} and Rowlinson~\cite{CvetkovicRowlinson} conjectured that
$K_1\vee P_{n-1}$ is the unique outerplanar graph of order $n$ with maximum
spectral radius. Tait and Tobin~\cite{TaitTobin} proved the conjecture for
sufficiently large $n$. In 2021, Lin and Ning~\cite{LinNing} proved that the conjecture holds for every $n\geq 2$ except $n=6$. We refer to
Felsner~\cite{Felsner} for basic facts about outerplanar graphs.

The discussion above naturally leads to the following problem concerning forbidden subgraphs.
Let
$\operatorname{spex}_{\mathcal{P}}(n,F)$ (resp.\
$\operatorname{spex}_{\mathcal{OP}}(n,F)$) denote the maximum spectral radius
of any $F$-free planar graph (resp.\ outerplanar graph) on $n$ vertices, and
let $\operatorname{SPEX}_{\mathcal{P}}(n,F)$ (resp.\
$\operatorname{SPEX}_{\mathcal{OP}}(n,F)$) denote the corresponding set of
extremal graphs.

In 2024, Fang, Lin, and Shi~\cite{FangLinShi} determined
$\operatorname{spex}_{\mathcal{P}}(n,tC_l)$ and
$\operatorname{spex}_{\mathcal{P}}(n,t\mathcal{C})$, and characterized the
unique extremal graphs when $1\leq t\leq2$, $l\geq3$, and $n$ is
sufficiently large, where $tC_l$ is the vertex-disjoint union of $t$
cycles of order $l$, and $t\mathcal{C}$ denotes the family of vertex-disjoint
unions of $t$ cycles with unrestricted lengths.
In the same year, Zhai and Liu~\cite{ZhaiLiu} determined
$\operatorname{SPEX}_{\mathcal{P}}(n,\mathcal F_k)$ for sufficiently large
$n$, where $\mathcal F_k$ is the family of graphs consisting of $k$
edge-disjoint cycles. In 2025, Wang, Huang, and Lin~\cite{WangHuangLin} determined
$\operatorname{SPEX}_{\mathcal{P}}(n,W_k)$,
$\operatorname{SPEX}_{\mathcal{P}}(n,F_k)$, and
$\operatorname{SPEX}_{\mathcal{P}}(n,(k+1)K_2)$ for sufficiently large $n$,
where $W_k=K_1\vee C_{k-1}$ is the wheel graph of order $k$ and
$F_k=K_1\vee kK_2$ is the friendship graph of order $2k+1$.

We use the following notation introduced by Yin and Li~\cite{YinLi}.
\begin{definition}\label{def:HOP}
For positive integers $n,n_1,n_2$ with $n-1\geq n_1\geq n_2\geq1$, define
\[
\HOP(n_1,n_2)=
\begin{cases}
P_{n_1}\displaystyle\cup\frac{n-1-n_1}{n_2}P_{n_2},
&\text{if }n_2\mid(n-1-n_1),\\[1ex]
P_{n_1}\displaystyle\cup
\left\lfloor\frac{n-1-n_1}{n_2}\right\rfloor P_{n_2}
\displaystyle\cup
P_{n-1-n_1-\left\lfloor\frac{n-1-n_1}{n_2}\right\rfloor n_2},
&\text{otherwise}.
\end{cases}
\]
\end{definition}

We also use the following notation for a linear forest containing two copies of its longest path.
\begin{definition}\label{def:HOPtwo}
For positive integers $n,n_1,n_2$ with $n-1\geq2n_1$ and $n_1\geq n_2\geq1$,
define
\[
\HOPtwo(n_1,n_2)=
\begin{cases}
2P_{n_1}\displaystyle\cup\frac{n-1-2n_1}{n_2}P_{n_2},
&\text{if }n_2\mid(n-1-2n_1),\\[1ex]
2P_{n_1}\displaystyle\cup
\left\lfloor\frac{n-1-2n_1}{n_2}\right\rfloor P_{n_2}
\displaystyle\cup
P_{n-1-2n_1-\left\lfloor\frac{n-1-2n_1}{n_2}\right\rfloor n_2},
&\text{otherwise}.
\end{cases}
\]
\end{definition}

In 2026, Yin and Li~\cite{YinLi} established a key structural theorem for a
broad class of forbidden outerplanar graphs. This result appears as
Theorem~1 in their paper and is stated as Lemma~\ref{lem:structure} below.
Under the conditions of this theorem, every connected extremal graph $G$ for
$\operatorname{spex}_{\mathcal{OP}}(n,F)$ has a vertex $u$ adjacent to all
other vertices. Moreover, $G-u$ is a linear forest.
Using this structural theorem, Yin and Li~\cite{YinLi} determined
$\operatorname{spex}_{\mathcal{OP}}(n,B_{t,l})$ and
$\operatorname{spex}_{\mathcal{OP}}(n,(t+1)K_2)$, and characterized the corresponding unique extremal graphs under the parameter ranges specified in their theorem, for all sufficiently large $n$,
where $B_{t,l}$ is obtained by sharing a common
vertex among $t$ edge-disjoint $l$-cycles.

In 2025, Yin, Li, and Meng~\cite{YinLiMeng} studied forbidden paths
and starlike trees. For integers $t\geq 1$ and $l\geq 2$, let
$tP_l$ be the linear forest consisting of $t$ copies of $P_l$, and let
$P_{t\cdot l}$ be the starlike tree with $t$ branches of length $l-1$. For
sufficiently large $n$, they determined the extremal graphs in
$\operatorname{SPEX}_{\mathcal{P}}(n,tP_l)$ when $t=1$ and $l\geq 6$, when
$t=2$ and $l\geq 4$, or when $t\geq 3$ and $l\geq 3$. They also characterized
$\operatorname{SPEX}_{\mathcal{OP}}(n,tP_l)$ when $t=1$ and $l\geq 4$, or
when $t\geq 2$ and $l\geq 3$. Their result for starlike trees is stated below.

\begin{theorem}[{\cite{YinLiMeng}}]\label{thm:YinLiMeng}
Let $n_0=\max\left\{1.27\times10^7, 6.5025\times2^{(t-1)(l-1)+l}\right\}.$
\begin{enumerate}[label=\textup{(\roman*)}]
\item If $t=1$, $l\geq4$, and
$n\geq\max\left\{n_0,
\left(5.08\left\lfloor\frac{l-2}{2}\right\rfloor\right)^2+1\right\}$, then
$K_1\vee\HOP\left(
\left\lceil\frac{l-2}{2}\right\rceil,
\left\lfloor\frac{l-2}{2}\right\rfloor\right)$
is the unique extremal graph for $\spex(n,P_{t\cdot l})$.
\item If $t=2$, $l\geq3$, and
$n\geq\max\left\{n_0,
\left(5.08\left\lfloor\frac{2l-3}{2}\right\rfloor\right)^2+1\right\}$, then
$K_1\vee\HOP\left(
\left\lceil\frac{2l-3}{2}\right\rceil,
\left\lfloor\frac{2l-3}{2}\right\rfloor\right)$
is the unique extremal graph for $\spex(n,P_{t\cdot l})$.
\item If $t\geq4$, $l\geq3$, and $n\geq n_0$, then $K_1\vee\HOP(tl-t-1,l-2)$ is the unique extremal graph for
$\spex(n,P_{t\cdot l})$.
\end{enumerate}
\end{theorem}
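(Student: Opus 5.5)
Since $P_{t\cdot l}$ is outerplanar with enough structure for the structural theorem of Yin and Li (Lemma~\ref{lem:structure}) to apply, I would first reduce to graphs of a special shape. For $n$ sufficiently large, every extremal graph $G$ for $\spex(n,P_{t\cdot l})$ has a dominating vertex $u$, and $L:=G-u$ is a linear forest. So the problem becomes: among linear forests $L$ on $n-1$ vertices with $K_1\vee L$ being $P_{t\cdot l}$-free, maximize $\rho(K_1\vee L)$.

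\textbf{Step 1: translate $P_{t\cdot l}$-freeness into a condition on $L$.} I would split according to where the center $c$ of a copy of $P_{t\cdot l}$ lies.
\begin{itemize}
\item If $c=u$, we need $t$ vertex-disjoint copies of $P_{l-1}$ in $L$. Each branch is $l-1$ vertices following $u$, and after its first vertex a branch cannot return to $u$.
\item If $c\in V(L)$, then $d(c)\le 3$. At most one branch passes through $u$, and it uses $u$ followed by a $P_{l-2}$ in $L$. The remaining branches run along the path component of $c$.
\end{itemize}
For $t\ge4$ only $c=u$ is possible, so the condition is exactly that $L$ has at most $t-1$ disjoint $P_{l-1}$'s. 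For $t=1$ the tree is $P_l$. For $t=2$ it is $P_{2l-1}$, and a path through $u$ has the form (path segment in $L$)--$u$--(path segment in $L$). In each case this yields a constraint on the component lengths of $L$:
\begin{itemize}
\item $t=1$: the two longest components have total length at most $l-2$.
\item $t=2$: the analogous constraint with $2l-3$ in place of $l-2$.
\item $t\ge4$: $\sum_i\lfloor |P_i|/(l-1)\rfloor\le t-1$.
\end{itemize}

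\textbf{Step 2: first-order optimization.} Since $e(K_1\vee L)=2(n-1)-k$, where $k$ is the number of components of $L$, and $\rho=\sqrt{n}+O(1)$ with Perron entries $x_u\approx1$ and $x_v\approx 1/\sqrt n$ elsewhere, the leading term of $\rho$ is governed by $e(L)$. I would show via the Rayleigh quotient and the eigen-equation $\rho x_v=x_u+\sum_{w\sim v,w\ne u}x_w$ that
\[
\rho(K_1\vee L)=\tfrac12+\sqrt{n-\tfrac34+\text{(small)}},
\]
with a correction that is increasing in $e(L)$ and of lower order. Hence an extremal $L$ must minimize $k$ subject to the constraint from Step 1.
\begin{itemize}
\item For $t\ge4$: at most $t-1$ of the components can have length $\ge l-1$, and ideally a single long component of length $t(l-1)-1$ absorbs all of them. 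All other components have length $\le l-2$, which forces the $\HOP(tl-t-1,l-2)$ shape.
\item For $t=1,2$: the sum of the two longest component lengths is capped at $s=l-2$ or $s=2l-3$, respectively, so most components have length $\le \lfloor s/2\rfloor$. One component of length $\lceil s/2\rceil$ is allowed.
\end{itemize}

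\textbf{Step 3: second-order comparison, which I expect to be the main obstacle.} Several forests may share the minimal $k$, differing in how the remainder $r$ is placed or in whether the longest path is split. To separate them, I would compare $\rho$ via the Perron vector. Moving a vertex from a short path to the end of a longer path, or merging, changes $\rho$ by roughly $\frac{2}{\rho^{2}}(\ldots)$ times a count of walks of length $3$ through internal path vertices. Such graph transformations (e.g.\ detaching an endpoint and reattaching it elsewhere) strictly increase $\rho$ whenever the constraint allows, and I would check them with the quotient-matrix or characteristic-polynomial method. The delicate point is making the error terms $O(n^{-3/2})$ explicit enough to beat the gain, which is where the explicit lower bounds on $n$ in the statement arise (e.g.\ $(5.08\lfloor s/2\rfloor)^2+1$). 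This yields uniqueness of the graphs listed in (i)–(iii).
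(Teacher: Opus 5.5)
The paper does not prove this theorem; it quotes it from Yin--Li--Meng. So I am judging your plan against the tools the paper uses for $t=3$: Lemma~\ref{lem:structure}, a freeness criterion as in Lemma~\ref{lem:criterion}, the transfer Lemma~\ref{lem:transfer}, and the edge-gain Lemma~\ref{lem:gain}. The constraints you derive in Step~1 are correct.

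\textbf{Main gap: case (iii).} Step~2 claims that minimizing the number $k$ of components forces a single long component. It does not. Suppose the long components have orders $j_i(l-1)+l-2$ with $\sum_i j_i=t-1$, and the rest are copies of $P_{l-2}$ plus one remainder. Then $k$ equals that of the target $P_{t(l-1)-1}\cup qP_{l-2}\cup P_r$. For example, $(t-1)(2l-3)=t(l-1)-1+(t-2)(l-2)$, so the forest $(t-1)P_{2l-3}\cup(q-t+2)P_{l-2}\cup P_r$ is $P_{t\cdot l}$-free and has exactly as many edges as the target. You do note in Step~3 that the longest path might be split. But your tool there, moves that increase $\rho$ ``whenever the constraint allows'', cannot settle it, because this competitor is a local maximum for such moves:
\begin{itemize}
\item Putting one vertex on an end of a component at least as long always pushes $\sum_i\lfloor s_i/(l-1)\rfloor$ above $t-1$. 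Examples are $P_{2l-3}\cup P_{2l-3}\to P_{2l-2}\cup P_{2l-4}$, $P_{l-2}\cup P_{l-2}\to P_{l-1}\cup P_{l-3}$, or a $P_{l-2}$ or $P_r$ feeding a $P_{2l-3}$ or a $P_{l-2}$.
\item Moves toward shorter components do not increase $\rho$.
\item Merges are impossible at minimal $k$.
\end{itemize}

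The missing idea is that Lemma~\ref{lem:transfer} is purely spectral. It may therefore be applied along chains whose intermediate graphs do contain $P_{t\cdot l}$, e.g.\ $P_{2l-3}\cup P_{2l-3}\to P_{2l-2}\cup P_{2l-4}\to\cdots\to P_{3l-4}\cup P_{l-2}$. Globally, in any admissible forest the $m$ largest components have total order at most $(t-1)(l-1)+m(l-2)$, a bound the target attains. Hence the target's order sequence majorizes that of every admissible forest. A chain of transfers $P_a\cup P_b\to P_{a+1}\cup P_{b-1}$, with $a\ge b$ and $b\le t(l-1)-2$ (so within $n_0$), then carries any admissible forest to the target with strictly increasing $\rho$. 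This settles (iii) without any edge counting.

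\textbf{Further problems.} The heuristic behind Step~2 is false as stated: one extra edge in $L$ does not guarantee a larger $\rho$. In $\rho^2=n-1+\sum_v d_L(v)x_v$, the term $\rho^{-2}\sum_v d_L(v)^2$ can differ by $\Theta(1)$, while one edge contributes only about $2/\rho$. For $n=103$, $K_1\vee(27P_1\cup25P_3)$ has one edge fewer than $K_1\vee51P_2$, yet its spectral radius is about $10.627$ versus $10.612$.

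For (i) and (ii), write $s$ for the bound $l-2$ or $2l-3$ on the two longest orders. You should compare a forest whose second-longest component is shorter than $\lfloor s/2\rfloor$ with one having $\Theta(n/s^2)$ more edges and only $O(n/s)$ deleted edges, via Lemma~\ref{lem:gain}. This is presumably where $n\ge(5.08\lfloor s/2\rfloor)^2+1$ enters. After that, the target majorizes every remaining candidate and transfers finish.

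Step~3's mechanism is also misdescribed. For $b\ge3$, $P_a\cup P_b$ and $P_{a+1}\cup P_{b-1}$ have the same degree sequence and the same short-walk counts, so a length-three walk count cannot separate them. The difference appears only at walk lengths comparable to $b$, which is why Lemma~\ref{lem:transfer} needs $n\ge6.5025\cdot2^{b+2}$; cite it rather than rederive it.

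Finally, Lemma~\ref{lem:structure} applies only to connected extremal graphs. You first need an argument like Lemma~\ref{lem:connected}, and you must check its hypotheses, e.g.\ $P_{t\cdot l}\not\subseteq K_{1,n-1}$.
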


They left the case \(t=3\) open and asked whether the following natural candidate is the unique extremal graph.
\begin{problem}[{\cite{YinLiMeng}}]\label{prob:YLM}
Let $t=3$, $l\geq3$, and let $n$ be sufficiently large. Is $K_1\vee\HOP(2l-2,l-2)$ the unique extremal graph for
$\spex(n,P_{3\cdot l})$?
\end{problem}

We solve this problem by showing that the proposed graph is not extremal.
The unique extremal graph is given in Theorem~\ref{thm:main}.
\begin{theorem}\label{thm:main}
If $l\geq3$ and
$
n\ge \max\left\{
1.27\times 10^7,\,
6.5025\times 2^{3l-2},\,
(\sqrt{62+3l}+8)^2
\right\},
$
then $K_1\vee\HOPtwo(2l-3,l-2)$ is the unique extremal graph for
$\spex(n,P_{3\cdot l})$.
\end{theorem}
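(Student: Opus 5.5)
The argument has three stages. First reduce to a dominating vertex plus a linear forest. Then find which linear forests $L$ make $K_1\vee L$ free of $P_{3\cdot l}$. Finally compare spectral radii among the admissible forests.

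\textbf{Stage 1: structure.} I will invoke the structural theorem of Yin and Li (Lemma~\ref{lem:structure}). It applies to $P_{3\cdot l}$ under the lower bounds on $n$ in the hypothesis; the terms $1.27\times 10^7$ and $6.5025\times 2^{3l-2}$ are exactly $n_0$ with $t=3$. It gives that every extremal graph $G$ has the form $K_1\vee L$, where $u$ is a dominating vertex and $L=G-u$ is a linear forest on $n-1$ vertices. Connectivity of an extremal graph follows from maximality, since adding edges raises $\rho$.

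\textbf{Stage 2: when $K_1\vee L$ is $P_{3\cdot l}$-free.} Write the path lengths of $L$ as $l_1\ge l_2\ge\cdots$. Any copy of $P_{3\cdot l}$ has some center.

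If the center is $u$, the three branches need three disjoint paths of $l-1$ vertices in $L$. We may route a branch through $u$ only at the center, so each branch lies inside a single path of $L$. Hence such a copy exists iff the paths can host three disjoint $P_{l-1}$'s, i.e.\ $\sum_i\lfloor l_i/(l-1)\rfloor\ge 3$.

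If the center is a vertex $w$ of $L$, then $w$ has degree at most $3$ in $K_1\vee L$, using two path-neighbours and $u$. One branch goes through $u$ and then may continue in another path of $L$. The other two branches run along the path containing $w$ in opposite directions. This needs some $l_i\ge 2l-1$ together with enough remaining room, roughly $l_i\ge 2l-1$ plus an extra $l-2$ vertices available elsewhere.

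Carrying out this case analysis carefully, I expect the admissible forests to be exactly those with:
\begin{itemize}
\item $l_1\le 2l-3$, or
\item $l_1=2l-2$ and no other path of length $\ge l-1$, as in the candidate $\HOP(2l-2,l-2)$, or
\item larger $l_1$ with all other paths short, together with conditions excluding three $P_{l-1}$'s.
\end{itemize}
In the first case two paths of length $2l-3$ contribute only $2$ to $\sum\lfloor l_i/(l-1)\rfloor$, and a third path of length $\ge l-1$ would make the sum $3$. So the forest $2P_{2l-3}\cup qP_{l-2}\cup P_r$ is admissible.

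The main obstacle is making this characterization exhaustive. In particular I must check the long-path cases $l_1\ge 2l-1$, where $w$-centered copies must be ruled out or forced.

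\textbf{Stage 3: spectral comparison.} Among admissible forests, more edges in $L$ dominates. Since $e(L)=n-1-c(L)$, with $c(L)$ the number of components, the extremal forest minimizes $c(L)$. For $\HOPtwo(2l-3,l-2)$ the count is
\[
c=2+\Bigl\lceil \tfrac{n-1-2(2l-3)}{l-2}\Bigr\rceil .
\]
For $\HOP(2l-2,l-2)$ it is
\[
c=1+\Bigl\lceil \tfrac{n-1-(2l-2)}{l-2}\Bigr\rceil ,
\]
which is typically the same or larger, since $2(2l-3)-(2l-2)=2l-4=2(l-2)$.

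When the component counts tie, I will use the standard asymptotic for $\rho(K_1\vee L)$: with $\rho\approx\sqrt{n}$, the eigen-equation gives
\[
\rho^2-\rho\,\epsilon=n-1+\sum_{\text{paths}}(\text{correction}),
\]
where the correction is roughly $2e(L)/\rho$ plus a second-order term $\sum_i W(P_{l_i})/\rho^2$. Here $W$ counts walks of length $2$, which is larger for balanced paths of moderate length.

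This comparison is delicate. Each forest will be compared against $\HOPtwo(2l-3,l-2)$ via the expansion $\rho=\sqrt{n-1}+\dots$, and the extra bound $n\ge(\sqrt{62+3l}+8)^2$ makes the second-order terms decisive. Swapping a $P_{2l-2}\cup P_{l-2}$-type configuration for $2P_{2l-3}$, or rebalancing, strictly increases $\rho$. Alternatively, I can compare by explicit characteristic-polynomial quotient arguments, using an equitable partition on $u$ and the paths, to show strict inequality.

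This uniqueness comparison is where I expect the main computational effort, alongside exhaustiveness in Stage 2.
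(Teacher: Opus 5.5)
Your Stages 1 and 2 follow the paper's route. First the Yin--Li structure theorem, then an exact freeness criterion. The paper states that criterion as $\sum_i\lfloor s_i/(l-1)\rfloor\le 2$ together with ``if some $s_i\ge 2l-1$ then every other $s_j\le l-3$''; your first bullet needs the capacity condition attached. The genuine gap is Stage 3, in two places.

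First, the principle ``among admissible forests, more edges dominates'' is never proved, and it is not true in general. From the eigen-equations, $\rho^2=\sum_{k\ge 0}W_k(L)\rho^{-k}$, where $W_k$ counts walks of length $k$ in $L$. Here $W_1=2e(L)$, but $W_2=4(n-1)-6c(L)+2c_1(L)$, with $c_1$ the number of isolated vertices. Start from $2P_{2l-3}$ plus many copies of $P_s$ with $2s-1\le l-2$. Pair the copies up, apply transfers $P_s\cup P_s\to P_{2s-1}\cup P_1$, and then split off one more vertex. The result is still admissible and has one edge fewer. Yet $\rho^2$ gains about $1/s$ through $W_2$ while losing only about $2/\rho$ through $W_1$.

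Second, your tie-breaking fails. Ties in $c(L)$ really occur for $r>0$, e.g.\ $2P_{2l-3}\cup qP_{l-2}\cup P_r$ versus $P_{2l-3}\cup P_{2l-4}\cup qP_{l-2}\cup P_{r+1}$. For $r\ge 2$ these have equal $W_1$ and $W_2$, and their walk counts first differ at order $\rho^{-(r+1)}$. So no second-order expansion separates them, and $n\ge(\sqrt{62+3l}+8)^2$ does nothing of the kind.

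The missing idea is the Yin--Li path transformation $P_a\cup P_b\to P_{a+1}\cup P_{b-1}$ ($a\ge b$). It strictly increases $\rho(K_1\vee\,\cdot\,)$ once $n\ge 6.5025\cdot 2^{b+2}$, and this is what the exponential hypothesis on $n$ is for. Applied inside each admissible class, it first excludes $\sum_i\lfloor s_i/(l-1)\rfloor<2$. It then forces an extremal forest to be $\HOP(3l-4,l-3)$, $\HOPtwo(2l-3,l-2)$ or $\HOP(2l-2,l-2)$, leaving only two comparisons.

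For the first comparison, your own count gives $c(\HOP(2l-2,l-2))=c(\HOPtwo(2l-3,l-2))+1$ \emph{exactly}, so there is never a tie. The paper realizes the difference as one switch: delete the end edge $zw$ of $P_{2l-2}$ and join $z$ to end vertices $a,b$ of two copies of $P_{l-2}$. By $1/\rho\le x_v\le 1/\rho+2.04/\rho^2$, the gain is at least $2x_z(x_a+x_b-x_w)/\boldsymbol{x}^{\mathrm T}\boldsymbol{x}>0$. For the second, against $\HOP(3l-4,l-3)$ the edge surplus $\delta$ is of order $n/l^2$, which is enough for $\delta\rho>4.2d$ with $d\le n-1$.

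Finally, connectivity does not follow from ``adding edges raises $\rho$'', since an added edge may create $P_{3\cdot l}$. The paper finds a vertex of degree at least $3l-2$ in the Perron component. It deletes the edges of another component and hangs one of its vertices on that vertex as a pendant vertex, which cannot create $P_{3\cdot l}$. The bound $n\ge(\sqrt{62+3l}+8)^2$ serves exactly this step and the structure theorem.
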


The remainder of the paper is organized as follows.
Section~\ref{sec:prelim} establishes the preliminary spectral and structural results. Section~\ref{sec:proof} then proves Theorem~\ref{thm:main}.

\section{Preliminaries}\label{sec:prelim}
For a graph $G$, let $\boldsymbol{x}=(x_1,x_2,\ldots,x_n)^{\mathrm T}$ be an eigenvector of
$A(G)$ corresponding to $\rho(G)$. For each vertex $v\in V(G)$, the
eigenvalue equation is
\[
\rho(G)x_v=\sum_{u\in N_G(v)}x_u.
\]
By the Rayleigh principle,
\[
\rho(G)
=\max_{\boldsymbol{x}\in\mathbb{R}^n\setminus\{\boldsymbol{0}\}}
\frac{\boldsymbol{x}^{\mathrm T}A(G)\boldsymbol{x}}
     {\boldsymbol{x}^{\mathrm T}\boldsymbol{x}}
=\max_{\boldsymbol{x}\in\mathbb{R}^n\setminus\{\boldsymbol{0}\}}
\frac{2\displaystyle\sum_{uv\in E(G)}x_ux_v}
     {\boldsymbol{x}^{\mathrm T}\boldsymbol{x}}.
\]
If $G$ is connected, the Perron--Frobenius theorem guarantees a
positive eigenvector
$\boldsymbol{x}=(x_1,x_2,\ldots,x_n)^{\mathrm T}$
corresponding to $\rho(G)$.
Consequently, if $G$ and $G'$ have the same vertex set and $\boldsymbol{x}$
is a positive eigenvector corresponding to $\rho(G)$, then
\begin{equation}\label{eq:edgecomparison}
 \rho(G')-\rho(G)\ge
 \frac{2}{\boldsymbol{x}^{\mathrm T}\boldsymbol{x}}
 \left(
 \sum_{vw\in E(G')\setminus E(G)}x_vx_w-
 \sum_{vw\in E(G)\setminus E(G')}x_vx_w
 \right).
\end{equation}

\begin{lemma}\label{lem:standard}
The following statements hold.
\begin{enumerate}[label=\textup{(\roman*)}]
  \item (\cite{Felsner}) Every outerplanar graph $G$ of order $n\ge2$
 satisfies $e(G)\le2n-3$.
  \item (\cite{Felsner}) If $u$ and $v$ are distinct vertices of an outerplanar graph $G$, then
$
\lvert N_G(u)\cap N_G(v)\rvert\le 2.
$
 \item (\cite{LiFeng}) If $G$ is connected and $H$ is a
 proper spanning subgraph of $G$, then $\rho(H)<\rho(G)$.
\end{enumerate}
\end{lemma}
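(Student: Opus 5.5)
The three statements of Lemma~\ref{lem:standard} are classical, and I would prove each directly. For (i) and (ii) I would use the apex trick: if $G$ is outerplanar, then adding a new vertex $w$ adjacent to every vertex of $G$ gives a planar graph $G^{+}$. To see this, draw $G$ with all vertices on the outer face and place $w$ in that face. For (iii) I would use the Rayleigh principle together with the Perron--Frobenius theorem, both recalled at the start of this section.

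For (i), the graph $G^{+}$ is planar of order $n+1\ge 3$. Euler's formula gives $e(G^{+})\le 3(n+1)-6$. Since $e(G^{+})=e(G)+n$, we get $e(G)\le 2n-3$. As a fallback, one can induct on $n$ using the fact that every outerplanar graph of order at least $2$ has a vertex of degree at most $2$. That fact follows from a weak-dual/ear argument on a maximal outerplanar supergraph: a leaf triangle of the triangulated polygon gives a vertex of degree $2$. Deleting such a vertex loses at most $2$ edges, and the base case $n=2$ has $e\le 1$.

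For (ii), I argue by contradiction. Suppose distinct vertices $u,v$ have three common neighbours $a,b,c$. Then $G$ contains $K_{2,3}$ with parts $\{u,v\}$ and $\{a,b,c\}$. In $G^{+}$ the vertices $u,v,w$ are each adjacent to all of $a,b,c$, so $G^{+}$ contains $K_{3,3}$. This contradicts Kuratowski's theorem, so $|N_G(u)\cap N_G(v)|\le 2$.

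For (iii), let $\boldsymbol{y}\ge\boldsymbol 0$ be a unit Perron vector of $A(H)$; a nonnegative one exists even when $H$ is disconnected. Since $A(G)-A(H)$ is a nonnegative matrix,
\[
\rho(H)=\boldsymbol{y}^{\mathrm T}A(H)\boldsymbol{y}\le \boldsymbol{y}^{\mathrm T}A(G)\boldsymbol{y}\le\rho(G).
\]
If equality held throughout, $\boldsymbol{y}$ would attain the Rayleigh maximum for $A(G)$ and hence be an eigenvector for $\rho(G)$. Because $G$ is connected, Perron--Frobenius forces $\boldsymbol{y}>\boldsymbol 0$. Now pick an edge $vw\in E(G)\setminus E(H)$, which exists because $H$ is proper. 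Then
\[
\boldsymbol{y}^{\mathrm T}\bigl(A(G)-A(H)\bigr)\boldsymbol{y}\ge 2y_vy_w>0,
\]
so the first inequality is strict, a contradiction. Hence $\rho(H)<\rho(G)$.

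The only delicate point is this equality case in (iii): one must note that a Rayleigh maximiser is a genuine eigenvector, so that positivity from Perron--Frobenius can be invoked.
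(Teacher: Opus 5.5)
Your proof is correct. It is a genuinely different route only in the trivial sense that the paper gives no proof of this lemma at all: parts (i) and (ii) are quoted from Felsner's book on outerplanar graphs, and part (iii) is quoted from Li and Feng, so there is no internal argument to match.

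What you supply instead is a self-contained derivation.
\begin{itemize}
\item For (i) and (ii), the apex construction $K_1\vee G$ reduces both outerplanar facts to properties of planar graphs. Part (i) needs only Euler's bound $e\le 3v-6$. Part (ii) needs only the nonplanarity of $K_{3,3}$. For that you need only the easy direction of Kuratowski, which itself follows from the bipartite Euler bound $e\le 2v-4$, since $9>8$.
\item For (iii), you give the standard Rayleigh-quotient argument and handle the equality case correctly. A Rayleigh maximiser is an eigenvector of $A(G)$, hence positive by Perron--Frobenius since $G$ is connected. So the edge $vw\in E(G)\setminus E(H)$ contributes $2y_vy_w>0$, which makes the first inequality strict.
\end{itemize}

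The citation route keeps the preliminaries short and defers to standard sources; Li--Feng in fact prove the general strict-monotonicity statement. Your route makes everything checkable from Euler's formula and Perron--Frobenius alone. The apex trick also has the pleasant feature of deriving both outerplanar facts from a single construction.

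The one step you might spell out is why the new vertex $w$ can be joined to every vertex without crossings when $G$ is disconnected. One way is to first add edges making $G$ connected while keeping all vertices on the outer face. This is standard and not a gap.
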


\begin{lemma}[{\cite{YinLi}}]\label{lem:structure}
Let $F$ be an outerplanar graph contained in $K_1\vee P_{n-1}$ but not
contained in $K_1\vee((t-1)K_2\cup(n-2t+1)K_1)$, where
$1\leq t\leq \frac{n-1}{2}$ and
$n\geq\max\{1.27\times10^7,[(64+|V(F)|)^{\frac{1}{2}}+8]^2\}$.
Suppose that $G$ is a connected extremal graph for
$\operatorname{spex}_{\mathcal{OP}}(n,F)$
and $\boldsymbol{x}$ is a positive eigenvector corresponding to $\rho(G)$
with $\max_{v\in V(G)}x_v=1$. Then there exists a vertex
$u\in V(G)$ such that $N_G(u)=V(G)\setminus\{u\}$ and $x_u=1$. In particular,
\begin{enumerate}[label=(\roman*)]
    \item $G$ contains a copy of $K_{1,n-1}$.
    \item The subgraph $G[N_G(u)]$ is a linear forest.
\end{enumerate}
\end{lemma}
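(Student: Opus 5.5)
The plan is the Tait--Tobin style argument for outerplanar spectral extremal problems: first locate a vertex of huge eigenvector weight, then show it must be dominating by a local edge-switching argument, and finally read off the structure of its neighbourhood from outerplanarity.

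\emph{Step 1 (lower bound).} The graph $K_1\vee((t-1)K_2\cup(n-2t+1)K_1)$ is outerplanar and, by hypothesis, $F$-free. So
\[
\rho:=\rho(G)\ge\rho(K_{1,n-1})=\sqrt{n-1}.
\]
Let $\boldsymbol{x}$ be the Perron vector with $\max_v x_v=1$, attained at some vertex $u$.

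\emph{Step 2 (weight concentrates on $u$).} Let $W(v)$ denote the sum of $x_y$ over vertices $y$ at distance at most $2$ from $v$. I will use the two-step walk identity
\[
\rho^2x_v=d(v)x_v+\sum_{y\neq v}|N(v)\cap N(y)|\,x_y .
\]
By Lemma~\ref{lem:standard}(ii) every coefficient $|N(v)\cap N(y)|$ is at most $2$, so $\rho^2x_v\le d(v)x_v+2W(v)$.

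\begin{itemize}
\item Summing over all $v$ and using $e(G)\le 2n-3$ from Lemma~\ref{lem:standard}(i) gives $\sum_v x_v=O(\sqrt n)$.
\item Let $L=\{v: x_v\ge \varepsilon\}$ for a small constant $\varepsilon$. A double-counting argument on $L$ (Tait--Tobin) shows $|L|$ is bounded.
\item A second pass of the identity at $u$ then shows $d(u)\ge n-C\sqrt n$.
\item The same identity gives $x_v=O(1/\sqrt n)$ for every $v\neq u$, since such $v$ has at most $2$ common neighbours with $u$.
\item Consequently the weight in any neighbourhood is controlled: for $w\notin N(u)$ we get $\sum_{y\in N(w)}x_y<1$, because $w$ has at most two neighbours in $N(u)$ of non-negligible weight and $d(w)$ is bounded. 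Whether $d(w)$ is indeed bounded needs care; I would derive it from $\rho x_w=\sum_{y\in N(w)}x_y$ together with the bound on $|L|$.
\end{itemize}
The numerical threshold $n\ge\max\{1.27\times 10^7,[(64+|V(F)|)^{1/2}+8]^2\}$ is exactly what these estimates will consume.

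\emph{Step 3 (switching).} Suppose some $w\ne u$ is not adjacent to $u$. Form $G'$ by deleting all edges at $w$ and adding the single edge $wu$.
\begin{itemize}
\item $G'$ is outerplanar, since we only removed edges and then attached a pendant vertex.
\item $G'$ is $F$-free. A copy of $F$ in $G'$ not using $w$ already lies in $G$. If the copy uses $w$, then $w$ is a leaf attached to $u$. Since $d(u)\ge n-C\sqrt n>|V(F)|$, some vertex of $N_G(u)$ lies outside the copy, and swapping $w$ for it yields a copy of $F$ in $G$, a contradiction.
\item By \eqref{eq:edgecomparison},
\[
\rho(G')-\rho(G)\ge \frac{2x_w}{\boldsymbol{x}^{\mathrm T}\boldsymbol{x}}\Bigl(x_u-\sum_{y\in N_G(w)}x_y\Bigr)>0,
\]
because $x_u=1>\sum_{y\in N_G(w)}x_y$ by Step~2. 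This contradicts the extremality of $G$.
\end{itemize}
Hence $N_G(u)=V(G)\setminus\{u\}$ and $x_u=1$, and statement (i) follows immediately.

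\emph{Step 4 (the linear forest).} Now consider $G[N_G(u)]$.
\begin{itemize}
\item It is acyclic, since a cycle $C$ in it would give $K_1\vee C\supseteq$ a $K_4$-minor in $G$, which is impossible in an outerplanar graph.
\item It has maximum degree at most $2$, since a vertex $v$ with three neighbours $a,b,c$ in it would give a $K_{2,3}$ with parts $\{u,v\}$ and $\{a,b,c\}$.
\end{itemize}
So $G[N_G(u)]$ is a linear forest, which proves (ii).

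The main obstacle is Step~2: obtaining quantitatively sharp enough bounds that $d(u)\ge n-O(\sqrt n)$ and that the weight around a non-neighbour of $u$ is below $1$. I would follow the Tait--Tobin bootstrapping, first bounding $|L|$ via $e(G[L])\le 2|L|-3$, then refining at $u$.
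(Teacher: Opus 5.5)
Your overall architecture is the right one, and Steps~1, 3 and~4 are sound. The paper does not prove this lemma; it imports it from Yin--Li. Its proof of Lemma~\ref{lem:connected}, however, carries out exactly your estimate $d_G(u)\ge n-1-8\sqrt{n-1}+4/\sqrt{n-1}>n-16\sqrt n$ and your pendant-edge swap. The threshold $n\ge[(64+|V(F)|)^{1/2}+8]^2$ is precisely the condition $n-16\sqrt n\ge|V(F)|$ that the swap needs.

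\textbf{The gap.} It lies in the last bullet of Step~2, which is the one inequality Step~3 actually uses. Put $S=V(G)\setminus N_G[u]$. Your estimates give $|S|\le 8\sqrt{n-1}$. For every $y\neq u$ they also give $d_G(y)\le |S|+3$, since $y$ has at most two common neighbours with $u$. Hence $(\rho^2-d_G(y))x_y\le 2\sum_z x_z\le 2(4n-6)/\rho$, i.e.\ $x_y\le(8+o(1))/\sqrt n$. For an arbitrary $w\in S$, the only degree bound available is $d_G(w)\le|S|+2$. So $\sum_{y\in N_G(w)}x_y$ is bounded only by a constant of order $64$, not by $1$.

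Your proposed repair does not supply the missing degree bound. For any fixed $\varepsilon$, once $x_y=O(1/\sqrt n)$ for all $y\ne u$, the set $L$ is just $\{u\}$. Knowing there are few heavy vertices says nothing about the degree of a light vertex $w$. Extracting $d_G(w)$ from $\rho x_w=\sum_{y\in N_G(w)}x_y$ is circular, since that sum is exactly the quantity you are trying to bound.

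\textbf{The fix.} Stop asking for the inequality at every non-neighbour of $u$; one suitably chosen $w$ suffices. Suppose $S\neq\emptyset$. Then $G[S]$ is outerplanar, so by Lemma~\ref{lem:standard}(i) it has a vertex $w$ with $d_{G[S]}(w)\le3$ (in fact $\le2$). By Lemma~\ref{lem:standard}(ii), $w$ has at most two neighbours in $N_G(u)$, and $w\not\sim u$. So $d_G(w)\le5$, and $\sum_{y\in N_G(w)}x_y\le5(8+o(1))/\sqrt n<1$ for $n\ge1.27\times10^7$. Applying your switching to this particular $w$ contradicts extremality, so $S=\emptyset$.

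With this change you can drop the $L$-bootstrapping and the bound on $|L|$ entirely. The estimates you need come directly from the identity at $u$ together with $\sum_v x_v\le 2e(G)/\rho$. Everything else goes through as you wrote it.
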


\begin{lemma}[{\cite{YinLi}}]\label{lem:coordinates}
Let $n\ge1.27\times10^7$ and $G=K_1\vee H$, where $H$ is a linear forest of order $n-1$.
Let $u$ be the vertex of degree $n-1$, and let
$\boldsymbol{x}$ be a positive eigenvector corresponding to $\rho(G)$ with
$\max_{v\in V(G)}x_v=1$.
If $\rho=\rho(G)$, then, for every $v\in V(H)$,
\[
 \frac1\rho\le x_v\le\frac1\rho+\frac{2.04}{\rho^2}.
\]
\end{lemma}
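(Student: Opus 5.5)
Our approach is to use the eigenvalue equations directly, with the crude facts that $u$ is adjacent to every vertex of $H$ and that every vertex of the linear forest $H$ has at most two neighbours in $H$. Throughout, $\rho=\rho(G)$. Since $G\supseteq K_{1,n-1}$, Lemma~\ref{lem:standard}(iii) (or just monotonicity of the spectral radius) gives $\rho\ge\sqrt{n-1}$. With $n\ge 1.27\times10^7$ this means $\rho>3000$, and this is the only place the size of $n$ is used.

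\emph{Step 1: the maximum entry sits at $u$.} For $v\in V(H)$ the eigenvalue equation reads
\[
\rho x_v = x_u+\sum_{w\in N_H(v)}x_w,\qquad |N_H(v)|\le 2 .
\]
Suppose the maximum entry $1$ were attained at some $v\in V(H)$. Then $\rho\le x_u+2\le 3$, contradicting $\rho>3000$. Hence $x_u=1$.

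Moreover, for every $w\in V(H)$ we get $\rho x_w\le 1+2\max_{y\in V(H)}x_y$. Applying this at a vertex $w$ where $\max_{y\in V(H)} x_y$ is attained gives $x_w\le \frac{1}{\rho-2}$, and therefore $x_y\le\frac1{\rho-2}$ for all $y\in V(H)$.

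\emph{Step 2: the lower bound.} Since $\boldsymbol{x}$ is positive, $\rho x_v=1+\sum_{w\in N_H(v)}x_w\ge 1$. Thus $x_v\ge 1/\rho$.

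\emph{Step 3: the upper bound.} Plugging the bound from Step 1 into the eigenvalue equation once more gives
\[
x_v\le \frac1\rho+\frac{2}{\rho(\rho-2)}.
\]
It remains to check that $\frac{2}{\rho(\rho-2)}\le\frac{2.04}{\rho^2}$. This is equivalent to $2\rho\le 2.04(\rho-2)$, i.e.\ $\rho\ge 102$, which holds with enormous room since $\rho\ge\sqrt{n-1}$.

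There is no real obstacle here. The only point needing care is Step 1: we must justify that $x_u$ is the maximal coordinate before using $x_u=1$, and we must bound the entries on $H$ uniformly by $1/(\rho-2)$. Both follow from the single observation that $\Delta(H)\le 2$ while $\rho$ is huge. If a sharper constant were wanted, one could iterate the same substitution (first $x_w\le 1/\rho+O(\rho^{-2})$, then $x_v\le 1/\rho+2/\rho^2+O(\rho^{-3})$), but the stated constant $2.04$ already follows from the single pass above.
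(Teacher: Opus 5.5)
Your argument is correct and complete. The contradiction $\rho\le 3$ correctly shows that the maximum entry sits at $u$. The self-referential bound $\rho M\le 1+2M$ at a maximizing vertex of $H$ gives $\max_{y\in V(H)}x_y\le 1/(\rho-2)$. A second pass through the eigenvalue equation then yields $\frac1\rho\le x_v\le \frac1\rho+\frac{2}{\rho(\rho-2)}\le\frac1\rho+\frac{2.04}{\rho^2}$ once $\rho\ge 102$.

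The paper does not prove this lemma itself; it quotes it from Yin and Li, so there is no in-paper argument to compare against. Your two-pass use of the eigenvalue equation, relying only on $\Delta(H)\le 2$, is a valid self-contained justification. It also makes explicit that the hypothesis $n\ge 1.27\times10^7$ is used only to guarantee $\rho\ge\sqrt{n-1}\ge 102$, which already holds for $n\ge 102^2+1$.
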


\begin{lemma}[{\cite[Outerplanar path transformation]{YinLi}}]\label{lem:transfer}
Let $H=P_a\cup P_b\cup H_0$, where $a\ge b\ge1$ and $H_0$ is a linear forest,
and let $H'=P_{a+1}\cup P_{b-1}\cup H_0$, where $P_0$ is omitted when $b=1$. If
$n\ge\max\{1.27\times10^7,6.5025\times 2^{b+2}\}$, then
$\rho(K_1\vee H')>\rho(K_1\vee H)$.
\end{lemma}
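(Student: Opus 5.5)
The plan is to reduce the comparison of $\rho(K_1\vee H')$ and $\rho(K_1\vee H)$ to a comparison of scalar functions attached to single paths, and then to compare those functions through walk counts.

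\textbf{Reduction to a scalar equation.} Let $G=K_1\vee H$ with dominating vertex $u$, and normalize the Perron vector so that $x_u=1$. The eigen-equations on $V(H)$ read $(\rho I-A(H))\boldsymbol{x}_H=\boldsymbol{1}$. Since $\rho(H)\le 2<\rho$, this gives $\boldsymbol{x}_H=(\rho I-A(H))^{-1}\boldsymbol{1}$. The equation at $u$ then yields
\[
\rho=\boldsymbol{1}^{\mathrm T}(\rho I-A(H))^{-1}\boldsymbol{1}=:f_H(\rho).
\]
Hence $\rho(K_1\vee H)$ is the unique solution in $(2,\infty)$ of $\lambda=f_H(\lambda)$, because $f_H$ is strictly decreasing there while $\lambda$ is increasing. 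Now $f_H$ is additive over components. Write
\[
g_m(\lambda)=\boldsymbol{1}^{\mathrm T}(\lambda I-A(P_m))^{-1}\boldsymbol{1}=\sum_{k\ge0}\frac{W_k(P_m)}{\lambda^{k+1}},
\]
where $W_k(P_m)$ is the number of walks of length $k$ in $P_m$. Then it suffices to show $f_{H'}(\rho)>f_H(\rho)$ at $\rho=\rho(K_1\vee H)$: this gives $f_{H'}(\rho)>\rho$, so the fixed point for $H'$ lies to the right. The required inequality is
\[
\Delta_a(\rho)>\Delta_{b-1}(\rho),\qquad \Delta_m:=g_{m+1}-g_m .
\]
The case $b=1$ is immediate, since $g_0=0$ and then the claim is just adding an edge (Lemma~\ref{lem:standard}(iii)).

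\textbf{Walk-count comparison.} Write $D_k(m)=W_k(P_{m+1})-W_k(P_m)\ge0$. The key combinatorial facts I would prove are the following.
\begin{enumerate}
\item $D_k(m)$ is nondecreasing in $m$.
\item $D_k(m)$ is independent of $m$ once $m\ge k$ or so. Heuristically, a walk of length $k$ "feels" only the ends within distance about $k/2$ of its start, so $D_k(m)$ depends on $m$ only while $m\lesssim k$.
\item For the first index $k_0$ (about $b-1$ or $b$) at which $D_{k_0}(a)\ne D_{k_0}(b-1)$, the difference is at least $1$.
\end{enumerate}
I would establish these by an explicit counting of walks in a path, namely via reflection or by counting walks from each vertex as binomial sums truncated at the ends. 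Then
\[
\Delta_a-\Delta_{b-1}=\sum_{k\ge k_0}\frac{D_k(a)-D_k(b-1)}{\rho^{k+1}}
\ge \frac{1}{\rho^{k_0+1}}-\sum_{k>k_0}\frac{|D_k(a)-D_k(b-1)|}{\rho^{k+1}}.
\]

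\textbf{Bounding the tail.} Using the crude bound $|D_k(\cdot)|\le W_k(P_{m+1})$ restricted to walks touching the added end, I get $|D_k|\le c\cdot 2^{k}$. Hence the tail is at most $c\sum_{k>k_0}2^k\rho^{-(k+1)}\approx c\,2^{k_0+1}\rho^{-(k_0+2)}$ for $\rho\ge 4$. The tail is therefore dominated by the main term as soon as $\rho\gtrsim c\,2^{k_0+1}$, with $\rho\ge\sqrt{n-1}$ available. This explains the hypothesis $n\ge 6.5025\cdot 2^{b+2}$: with $k_0\approx b$ and $\rho^2\ge n-1$, the constants should match after care. The requirement $n\ge 1.27\times10^7$ only guarantees $\rho$ large, so that geometric series and Lemma~\ref{lem:coordinates}-type error terms are negligible.

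\textbf{Main obstacle.} The delicate step is the exact identification of $k_0$ and of the sign and size of $D_{k_0}(a)-D_{k_0}(b-1)$, together with a tail bound whose constant is sharp enough to give exactly $2^{b+2}$ rather than a weaker power. If clean walk counting proves messy, my fallback is to compute $g_m$ in closed form, writing $\lambda=q+q^{-1}$ and solving the linear recurrence for $(\lambda I-A(P_m))^{-1}\boldsymbol{1}$. I expect this to give
\[
\Delta_m=\frac{1}{\lambda-2}\Bigl(1-\frac{c(q)}{q^{m}}\Bigr)
\]
with an explicit positive $c(q)$. In that form $\Delta_m$ is visibly strictly increasing in $m$, which settles $\Delta_a>\Delta_{b-1}$ directly, since $a>b-1$.
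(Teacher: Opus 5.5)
The paper does not prove this lemma; it quotes it from Yin and Li. Your argument is therefore an independent route, and its core is correct. The formula $\boldsymbol{x}_H=(\rho I-A(H))^{-1}\mathbf 1$ is right, and so is the scalar equation $\lambda=f_H(\lambda)$, with $f_H$ additive over components and strictly decreasing on $(2,\infty)$. The reduction of the lemma to $\Delta_a(\rho)>\Delta_{b-1}(\rho)$ is also sound. This route buys something real. Finished correctly (see below), it proves $\rho(K_1\vee H')>\rho(K_1\vee H)$ whenever $\rho(K_1\vee H)>2$, with no dependence on $b$, so the hypothesis $n\ge 6.5025\cdot 2^{b+2}$ is unnecessary. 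In this paper, the exponential term $6.5025\cdot 2^{3l-2}$ in Theorem~\ref{thm:main} would then only be needed to get the polynomial bound $n-1\ge(8(l-1)^2+4)^2$ used in Lemma~\ref{lem:candidate-comparison}.

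\textbf{The tail estimate fails as written.}
\begin{enumerate}
\item The first index where $D_k(a)$ and $D_k(b-1)$ differ is $k_0=b$, with difference $2$. So the main term is $2\rho^{-(b+1)}$.
\item Your crude bound $\lvert D_k(a)-D_k(b-1)\rvert\le c\,2^k$ gives a tail of order $c\,2^{b+1}\rho^{-(b+2)}$. Dominating it needs $\rho\gtrsim c\,2^{b}$, that is, $n\gtrsim 4^{b}$.
\item The hypothesis only gives $\rho\ge\sqrt{n-1}$, about $5.1\cdot 2^{b/2}$ at the threshold. The relation $\rho^2\ge n-1$ turns $n\gtrsim 2^b$ into $\rho\gtrsim 2^{b/2}$, not $\rho\gtrsim 2^{b}$. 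No choice of constants fixes this exponent, so this part does not ``explain'' the hypothesis.
\end{enumerate}

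\textbf{You do not need any tail bound.} Your Fact~1 already makes every coefficient $D_k(a)-D_k(b-1)$ nonnegative, and it has a one-line proof without reflection counting.
\begin{enumerate}
\item Label $P_{m+1}$ by $1,\dots,m+1$. Then $D_k(m)$ counts the walks of length $k$ in $P_{m+1}$ that visit vertex $m+1$.
\item The shift $i\mapsto i+1$ injects these into the walks counted by $D_k(m+1)$.
\item Shifting by $a-b+1\ge1$ lands in walks avoiding vertex $1$. The walk $1,2,\dots,a+1$ therefore shows $D_a(a)>D_a(b-1)$.
\end{enumerate}
Hence $\Delta_a(\rho)-\Delta_{b-1}(\rho)\ge\rho^{-(a+1)}>0$ for every $\rho>2$. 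Facts~2 and~3, the tail bound, and the separate case $b=1$ can all be dropped.

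\textbf{Your fallback also works, but your predicted formula is not exact.} Put $\lambda=q+q^{-1}$ and $s=q^{-1}\in(0,1)$. Then
\[
g_m(\lambda)=\frac{1}{\lambda-2}\Bigl(m-\frac{2s(1-s^m)}{(1-s)(1+s^{m+1})}\Bigr),\qquad
\Delta_m(\lambda)=\frac{1}{\lambda-2}\Bigl(1-\frac{2(1+s)s^{m+1}}{(1+s^{m+1})(1+s^{m+2})}\Bigr).
\]
The factor multiplying $q^{-m}$ depends on $m$, so monotonicity is not quite ``visible''. It still holds by a short check. With $t=s^{m+1}$, the map $t\mapsto t/((1+t)(1+st))$ has derivative with numerator $1-st^2>0$ on $(0,1)$, and $t$ decreases as $m$ grows. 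So $\Delta_m(\lambda)$ is strictly increasing in $m$ for every $\lambda>2$, which again gives $\Delta_a(\rho)>\Delta_{b-1}(\rho)$.
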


If $G$ has components $G_1,\ldots,G_k$, then $A(G)$ is the  block diagonal and
\[\rho(G)=\max_{1\leq i\leq k}\rho(G_i).\]

\begin{lemma}\label{lem:replacement}
Let $F$ be a connected graph of order $k\ge2$, and let $G$ be an $F$-free
graph. Let $v\notin V(G)$, $u\in V(G)$, and $G'=G+uv$. If
$d_G(u)\ge k-1$, then $G'$ is $F$-free.
\end{lemma}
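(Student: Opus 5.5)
The plan is to argue by contradiction: take a copy of $F$ in $G'$ and turn it into a copy of $F$ inside $G$, contradicting that $G$ is $F$-free. The only new vertex is $v$, and it is a pendant vertex attached to $u$. The hypothesis $d_G(u)\ge k-1$ should let us swap $v$ for an unused neighbour of $u$ in $G$.

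First, suppose $G'$ contains a subgraph $F'$ isomorphic to $F$. If $v\notin V(F')$, then $F'$ lies in $G'-v=G$, which is impossible. So $v\in V(F')$. Since $F$ is connected and $k\ge2$, $F'$ has no isolated vertices, so $d_{F'}(v)\ge1$. Because $d_{G'}(v)=1$, we get $d_{F'}(v)=1$ and the unique edge of $F'$ at $v$ is $uv$. In particular $u\in V(F')$.

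Second, set $S=V(F')\setminus\{v\}$. Then $|S|=k-1$ and $u\in S$, so $S\setminus\{u\}$ has exactly $k-2$ vertices. Since $d_G(u)\ge k-1>k-2$, there is a vertex $w\in N_G(u)\setminus S$. Note $w\ne v$ because $v\notin V(G)$.

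Third, form $F''$ from $F'$ by deleting $v$ and adding $w$ together with the edge $uw$. Every edge of $F'$ other than $uv$ lies in $G$, and $uw\in E(G)$, so $F''\subseteq G$. The map fixing $S$ and sending $v\mapsto w$ is an isomorphism $F'\to F''$, since $v$ and $w$ are each adjacent only to $u$ within the respective copies. Hence $G$ contains $F$, a contradiction.

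There is no real obstacle here. The only point needing care is that $v$ must be a leaf of the copy attached to $u$, which is exactly where connectivity of $F$ and $k\ge2$ are used. The rest is the counting $|S\setminus\{u\}|=k-2<d_G(u)$.
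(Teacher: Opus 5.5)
Your proof is correct and follows the same route as the paper: a copy of $F$ in $G'$ must use $v$ as a leaf attached to $u$, and since the copy has only $k-2$ vertices besides $u$ and $v$, the bound $d_G(u)\ge k-1$ supplies a neighbour $w$ of $u$ outside the copy that can replace $v$. You also spell out why connectivity of $F$ and $k\ge2$ force the edge $uv$ into the copy, which the paper asserts without justification.
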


\begin{proof}
Suppose that $G'$ contains a copy of $F$. Since $G$ is $F$-free,
this copy must contain $v$ and the edge $uv$. Moreover,
$|V(F)\setminus\{u,v\}|=k-2$. Since $d_G(u)\geq k-1$, there exists
$w\in N_G(u)\setminus V(F)$. Replacing $v$ and $uv$ in $F$ with $w$
and $uw$, respectively, gives a copy of $F$ in $G$, a contradiction.
\end{proof}
\begin{lemma}\label{lem:connected}
Let $l\ge3$ and $n\ge\max\left\{1.27\times10^7,\,
(\sqrt{62+3l}+8)^2\right\}.$
Then every extremal graph for $\spex(n,P_{3\cdot l})$ is connected.
\end{lemma}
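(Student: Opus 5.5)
We argue by contradiction. Suppose $G$ is an extremal graph for $\spex(n,P_{3\cdot l})$ that is disconnected, with components $G_1,\ldots,G_k$, $k\ge2$. Since $\rho(G)=\max_i\rho(G_i)$, we may take $G_1$ with $\rho(G_1)=\rho(G)$. The plan is to build a connected $P_{3\cdot l}$-free outerplanar graph $G'$ on the same vertex set that contains $G$ as a proper spanning subgraph. Lemma~\ref{lem:standard}(iii) then gives $\rho(G')>\rho(G)$, contradicting extremality. The natural way to build $G'$ is to reattach the small components to a vertex of large degree in $G_1$, one pendant vertex at a time, using Lemma~\ref{lem:replacement} to keep $G'$ free of $P_{3\cdot l}$.

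First we show that $G_1$ has a vertex of large degree. The graph $K_1\vee P_{n-1}$ contains $P_{3\cdot l}$, but $K_{1,n-1}$ does not, and $K_{1,n-1}$ is outerplanar. So $\rho(G)\ge\sqrt{n-1}$. If every vertex of $G_1$ had degree at most $3l-3$, then $\rho(G_1)\le 3l-3$. Under the hypothesis $n\ge(\sqrt{62+3l}+8)^2$ we have $\sqrt{n-1}>3l-3$, which is a contradiction. Hence some $u\in V(G_1)$ satisfies $d_{G}(u)\ge 3l-2=|V(P_{3\cdot l})|-1$, since $P_{3\cdot l}$ has $3(l-1)+1=3l-2$ vertices. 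If needed, this degree bound could be sharpened using the edge bound $e(G_1)\le 2|V(G_1)|-3$ from Lemma~\ref{lem:standard}(i) together with a Hong-type bound $\rho\le\sqrt{2e-n+1}$, but the crude bound already suffices.

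Next we perform the reattachment. Let $H=G_2\cup\cdots\cup G_k$, and let $v_1,\ldots,v_m$ be the vertices of $H$. Start from $G_1$ and add $v_1,\ldots,v_m$ one at a time as pendant vertices attached to $u$. At each step the degree of $u$ only increases, so Lemma~\ref{lem:replacement} applies with the connected graph $F=P_{3\cdot l}$ of order $3l-2$, and each intermediate graph remains $P_{3\cdot l}$-free. Call the result $G''$; it is $G_1$ with $m$ pendant leaves at $u$. Adding pendant vertices preserves outerplanarity. Then $\rho(G'')>\rho(G_1)=\rho(G)$ by Lemma~\ref{lem:standard}(iii), since $G_1$ plus isolated vertices is a proper spanning subgraph of the connected graph $G''$. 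This already contradicts extremality, because $G''$ is an $n$-vertex $P_{3\cdot l}$-free outerplanar graph. We do not even need to keep the edges of $H$.

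The main obstacle is checking that Lemma~\ref{lem:replacement} applies cleanly when several vertices are added successively. The lemma requires $G$ to be $F$-free and $d_G(u)\ge k-1$ at each step, and both conditions are inherited, so induction handles it. The other point to check is that the degree threshold $3l-3$ really follows from the stated lower bound on $n$. For this I would use $\rho(G_1)\le\Delta(G_1)$ and verify numerically that $(\sqrt{62+3l}+8)^2-1>(3l-3)^2$. This inequality fails for large $l$. In that case I would fall back on the alternative hypothesis $n\ge 6.5025\cdot2^{3l-2}$ from Theorem~\ref{thm:main}. Alternatively, I would use a sharper estimate: the outerplanar bound $\rho(G_1)\le 2+\sqrt{2(\Delta-1)}$ (or a similar degree-based bound) shows that $\Delta(G_1)$ must be of order $n$ whenever $\rho\ge\sqrt{n-1}$. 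That is far above $3l-3$.
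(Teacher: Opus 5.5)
Your reattachment step is fine. It is essentially the paper's, which attaches a single vertex of another component to $u$ after deleting that component's edges. The first step, however, has a genuine gap: under the stated hypotheses you never obtain a vertex of degree at least $3l-2$.

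\textbf{Where it fails.} The bound $\Delta(G_1)\ge\rho\ge\sqrt{n-1}$ gives $d(u)\ge3l-2$ only when $\sqrt{n-1}>3l-3$. The hypothesis $n\ge1.27\times10^7$ secures this only when $3l-3<\sqrt{1.27\times10^7-1}\approx3563.7$, that is, for $l\le1188$. For large $l$ the lemma allows $n$ as small as $(\sqrt{62+3l}+8)^2=3l+126+16\sqrt{3l+62}$, so $n=3l+O(\sqrt l)$. Your sentence asserting that $\sqrt{n-1}>3l-3$ follows from $n\ge(\sqrt{62+3l}+8)^2$ is false, as you notice later.

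Neither fallback repairs this.
\begin{enumerate}
\item Invoking $n\ge6.5025\cdot2^{3l-2}$ proves a weaker lemma than the one stated. That would be adequate for Theorem~\ref{thm:main}, but not for this statement.
\item A bound of the shape $\rho\le2+\sqrt{2(\Delta-1)}$, which you do not justify, only yields $\Delta\ge1+(\sqrt{n-1}-2)^2/2\approx n/2$. For $n=3l+O(\sqrt l)$ this is about $3l/2<3l-2$.
\end{enumerate}
So being ``of order $n$'' is not enough. You need $\Delta\ge n-O(\sqrt n)$ with an explicit constant matched to the hypothesis.

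\textbf{The missing idea.} The paper uses a count of walks of length two at the vertex maximizing the Perron vector. Take $u\in V(G_1)$ with $x_u=1$, where $\boldsymbol{x}$ is the eigenvector of $G_1$ extended by zero. Lemma~\ref{lem:standard}(ii) gives
\[
\rho^2=\sum_{y\in N(u)}\sum_{z\in N(y)}x_z=d(u)+\sum_{z\ne u}|N(u)\cap N(z)|\,x_z\le d(u)+2\sum_z x_z .
\]
Summing the eigenvalue equations gives $\rho\sum_z x_z=\sum_z d(z)x_z\le2e(G)\le4n-6$. Hence
\[
d(u)\ge\rho^2-\frac{4e(G)}{\rho}\ge n-1-8\sqrt{n-1}+\frac{4}{\sqrt{n-1}}>n-16\sqrt n .
\]
The condition $n-16\sqrt n\ge3l-2$ is exactly equivalent to $n\ge(\sqrt{62+3l}+8)^2$. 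With this degree bound in place, the rest of your argument goes through unchanged.
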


\begin{proof}
Suppose to the contrary that $G$ is disconnected. Let $G_1$ be a component
of $G$ such that $\rho(G_1)=\rho(G)$, and write $\rho=\rho(G)$.
The star $K_{1,n-1}$ is $P_{3\cdot l}$-free. Hence
$\rho\ge\sqrt{n-1}\ge\sqrt{12\,699\,999}>3563>36$.

Take a positive eigenvector of $G_1$ corresponding to $\rho(G_1)$ and extend
it by zero to $V(G)$. Denote the resulting vector by $\boldsymbol{x}$ and choose
it so that $\max_{v\in V(G)}x_v=1$. Let $u\in V(G_1)$ satisfy $x_u=1$.
Summing the eigenequations over all vertices gives
\[
 \rho\sum_vx_v=\sum_vd_G(v)x_v\le2e(G).
\]
Applying the eigenvalue equation twice at $u$ and then using
Lemma~\ref{lem:standard}(ii), we obtain
\begin{align*}
 \rho^2
 &=\sum_{y\in N_G(u)}\sum_{z\in N_G(y)}x_z\notag\\
 &=d_G(u)+\sum_{z\ne u}|N_G(u)\cap N_G(z)|x_z\notag\\
 &\le d_G(u)+2\sum_zx_z
 \le d_G(u)+\frac{4e(G)}{\rho}.
\end{align*}
By Lemma~\ref{lem:standard}(i) and $\rho\ge\sqrt{n-1}$,
\begin{align*}
 d_G(u)
 &\ge n-1-\frac{8n-12}{\sqrt{n-1}}\notag\\
 &=n-1-8\sqrt{n-1}+\frac4{\sqrt{n-1}}
 >n-16\sqrt n\ge3l-2.
\end{align*}
The last inequality follows from
$n\ge(\sqrt{62+3l}+8)^2$.

Now choose a component $G_i\ne G_1$. Delete all edges of $G_i$, select
$v\in V(G_i)$, and add the edge $uv$. Denote the resulting graph by $G'$.
Adding a pendant edge preserves outerplanarity.
Since $d_{G_1}(u)=d_G(u)\geq3l-2\geq |V(P_{3\cdot l})|-1$, Lemma~\ref{lem:replacement} shows that $G_1+uv\) is \(P_{3\cdot l}$-free.
Thus, $G'$ is a $P_{3\cdot l}$-free outerplanar graph.
Since $G_1$ together with the isolated vertex $v$ is a proper spanning
subgraph of the connected graph $G_1+uv$, Lemma~\ref{lem:standard}(iii) gives
\[
  \rho(G')\ge\rho(G_1+uv)>\rho(G_1\cup\{v\})=\rho(G_1)=\rho(G),
\]
contrary to the extremality of $G$. Therefore, $G$ is connected.
\end{proof}

For $l\ge3$, we have $P_{3\cdot l}\subseteq K_1\vee P_{n-1}$ and
$P_{3\cdot l}\nsubseteq K_1\vee(2K_2\cup(n-5)K_1)$. Therefore,
Lemmas~\ref{lem:connected} and~\ref{lem:structure} yield the following
result.

\begin{corollary}\label{cor:join}
If $l\geq3$ and
$
n\geq\max\left\{
1.27\times10^7,\,
6.5025\times2^{3l-2},\,
(\sqrt{62+3l}+8)^2
\right\},
$
then any $P_{3\cdot l}$-free outerplanar graph $G$ of order $n$ satisfying
$
\rho(G)=\spex(n,P_{3\cdot l}),
$
has the form
$
G=K_1\vee H,
$
where $H$ is a linear forest of order $n-1$.
\end{corollary}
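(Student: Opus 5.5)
The corollary follows by combining Lemma~\ref{lem:connected} with Lemma~\ref{lem:structure} for $F=P_{3\cdot l}$ and $t=3$. Let $G$ be an extremal graph for $\spex(n,P_{3\cdot l})$. The bound on $n$ contains $\max\{1.27\times10^7,(\sqrt{62+3l}+8)^2\}$, so Lemma~\ref{lem:connected} shows that $G$ is connected. We may therefore take a positive Perron eigenvector $\boldsymbol{x}$ normalized so that $\max_v x_v=1$.

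Next we check the hypotheses of Lemma~\ref{lem:structure}.

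\textbf{Outerplanarity of $F$.} $P_{3\cdot l}$ is a tree, hence outerplanar.

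\textbf{Containment in $K_1\vee P_{n-1}$.} Let the centre of $P_{3\cdot l}$ be the dominating vertex. The three branches become disjoint subpaths of $P_{n-1}$, each of $l-1$ vertices, with the first vertex of each branch taken adjacent to the centre. This is possible as soon as $n-1\ge 3(l-1)$, which holds.

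\textbf{Non-containment in $K_1\vee(2K_2\cup(n-5)K_1)$.} This graph consists of a star together with two triangles through the centre $w$, so every cycle passes through $w$. In an embedded $P_{3\cdot l}$, the degree-3 vertex of the tree maps either to $w$ or to a vertex of degree at most $2$. The latter is impossible, so the centre maps to $w$. Each branch is then a path of $l-1\ge2$ vertices avoiding $w$, and must therefore lie inside some component of $(2K_2\cup(n-5)K_1)$. The only components with at least two vertices are the two copies of $K_2$, so at most two branches can be accommodated. Hence there is no copy of $P_{3\cdot l}$.

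\textbf{Range of $t$ and the bound on $n$.} We need $1\le 3\le (n-1)/2$, which is trivial. We also need $n\ge[(64+|V(F)|)^{1/2}+8]^2$ with $|V(F)|=3(l-1)+1=3l-2$. This gives $64+3l-2=62+3l$, which matches the hypothesis exactly.

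Lemma~\ref{lem:structure} now yields a vertex $u$ adjacent to all other vertices such that $G[N_G(u)]=G-u$ is a linear forest $H$ of order $n-1$. Since $u$ is joined to every vertex of $H$, we get $G=K_1\vee H$.

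The bound $6.5025\times 2^{3l-2}$ is not needed here; it appears only because it is used later with Lemma~\ref{lem:transfer}. The only step needing real care is the non-containment check, and even that is a short degree argument.
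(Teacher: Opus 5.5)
Your proof is correct and follows the paper's argument: you get connectivity from Lemma~\ref{lem:connected}, then apply Lemma~\ref{lem:structure} with $F=P_{3\cdot l}$ and $t=3$, using $P_{3\cdot l}\subseteq K_1\vee P_{n-1}$ and $P_{3\cdot l}\nsubseteq K_1\vee(2K_2\cup(n-5)K_1)$. The paper states these two containment facts without proof. Your degree argument and your check that $|V(F)|=3l-2$ (so the bound $(\sqrt{62+3l}+8)^2$ matches exactly) supply that missing detail correctly.
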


\section{Proof of the main theorem}\label{sec:proof}

We first give an exact characterization of the $P_{3\cdot l}$-free graphs of
the form $K_1\vee H$, where $H$ is a linear forest.

\begin{lemma}\label{lem:criterion}
For $H=\bigcup_{i=1}^kP_{s_i}$ with $s_i\ge1$, the graph $K_1\vee H$ is $P_{3\cdot l}$-free if and only if
\begin{equation}\label{eq:capacity}
 \sum_{i=1}^k\left\lfloor\frac{s_i}{l-1}\right\rfloor\le2
\end{equation}
and
\begin{equation}\label{eq:long}
 \text{if }s_i\ge2l-1,\quad\text{then }s_j\le l-3
 \quad\text{for all distinct }i,j.
\end{equation}
\end{lemma}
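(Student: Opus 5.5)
The plan is to classify every embedding of $P_{3\cdot l}$ in $K_1\vee H$ by where its center goes. Let $u$ be the apex of $K_1\vee H$. Recall that $P_{3\cdot l}$ consists of a center $c$ and three legs, each a path on $l-1$ vertices hanging from $c$, so $|V(P_{3\cdot l})|=3l-2$. Both directions are proved by matching the two ways the center can be placed (at $u$, or in $H$) with the two conditions \eqref{eq:capacity} and \eqref{eq:long}.

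\emph{Sufficiency of violating a condition (constructions).} First suppose \eqref{eq:capacity} fails. Then $H$ contains three vertex-disjoint copies of $P_{l-1}$, since a path $P_{s_i}$ contains $\lfloor s_i/(l-1)\rfloor$ disjoint copies. Put $c=u$ and join $u$ to an endpoint of each copy; this gives $P_{3\cdot l}$. Next suppose \eqref{eq:long} fails, i.e.\ $s_i\ge 2l-1$ and $s_j\ge l-2$ for some $i\ne j$. Put $c$ at the $l$-th vertex $w$ of $P_{s_i}$. The $l-1$ vertices of $P_{s_i}$ on each side of $w$ form two legs. The third leg is $u$ followed by $l-2$ consecutive vertices of $P_{s_j}$. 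This is a copy of $P_{3\cdot l}$.

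\emph{Necessity (analysing an arbitrary copy).} Conversely, assume $K_1\vee H$ contains a copy $T$ of $P_{3\cdot l}$; we show one of the two conditions fails.

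Case 1: $c=u$. No leg can return to $u$, so each leg is a path on $l-1$ vertices inside $H$. These three paths are disjoint, and each lies in a single component $P_{s_i}$. Three disjoint $P_{l-1}$'s in a linear forest force $\sum_i\lfloor s_i/(l-1)\rfloor\ge 3$. This is the one counting fact to verify: a path $P_s$ contains at most $\lfloor s/(l-1)\rfloor$ disjoint $P_{l-1}$'s. So \eqref{eq:capacity} fails.

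Case 2: $c=w\in V(H)$, say $w\in P_{s_i}$. Since $d_H(w)\le 2$, the center $w$ has at most two neighbors in $T$ other than $u$. Hence $u\in V(T)$ and $u$ is the first vertex of the third leg. The other two legs, together with $w$, lie in $H$ and contain $w$, so they form a subpath of $P_{s_i}$ on $2l-1$ vertices, giving $s_i\ge 2l-1$. The remaining $l-2$ vertices of the third leg form a path $Q$ in $H$, disjoint from that subpath.
\begin{itemize}
\item If $Q$ lies in some $P_{s_j}$ with $j\ne i$, then $s_j\ge l-2$, so \eqref{eq:long} fails.
\item If $Q$ lies in $P_{s_i}$ itself, then $s_i\ge (2l-1)+(l-2)=3l-3$. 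Thus $\lfloor s_i/(l-1)\rfloor\ge 3$, and \eqref{eq:capacity} fails.
\end{itemize}

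The only delicate step is Case 2. There one must observe that $u$ cannot sit in the interior of a leg: otherwise $w$ would need three neighbors in $H$. This forces the rigid shape ``two legs inside one path of $H$, third leg entering through $u$''. The rest is bookkeeping, including the degenerate case $l=3$, where $Q$ is a single vertex.
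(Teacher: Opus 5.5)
Your proof is correct and follows the paper's argument essentially step for step. It uses the same two constructions: the center at $u$ with three disjoint copies of $P_{l-1}$, and the center at the middle of a $P_{2l-1}$ with the third leg routed through $u$ into $P_{s_j}$. The converse uses the same case split on whether the degree-three vertex is the apex or lies in $H$, where the subcase $Q\subseteq P_{s_i}$ gives $s_i\ge 3l-3$ and so violates the capacity condition.
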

\begin{proof}
Let $G=K_1\vee H$, and let $u$ be the vertex of $K_1$.

Suppose first that \eqref{eq:capacity} fails. Then $H$ contains three
vertex-disjoint subpaths $Q_1,Q_2,Q_3$, each of order $l-1$. Hence
$G[\{u\}\cup V(Q_1)\cup V(Q_2)\cup V(Q_3)]$ contains a copy of
$P_{3\cdot l}$ whose unique vertex of degree three is $u$.

Next, suppose that \eqref{eq:long} fails. Then there exist distinct
$i,j\in\{1,\ldots,k\}$ such that $s_i\ge 2l-1$ and $s_j\ge l-2$.
Choose subpaths $Q\subseteq P_{s_i}$ and $R\subseteq P_{s_j}$ with
$Q\cong P_{2l-1}$ and $R\cong P_{l-2}$, and let $w$ be the middle
vertex of $Q$. Then $G[\{u\}\cup V(Q)\cup V(R)]$ contains a copy of
$P_{3\cdot l}$ whose unique vertex of degree three is $w$. Therefore,
if $G$ is $P_{3\cdot l}$-free, then both \eqref{eq:capacity} and
\eqref{eq:long} hold.

Conversely, suppose that \eqref{eq:capacity} and \eqref{eq:long} hold, and
assume for a contradiction that $G$ contains a copy $T$ of $P_{3\cdot l}$. Let $w$ be the unique vertex of degree
three in $T$. If $w=u$, then $T-u$ is a linear forest consisting of three paths
of order $l-1$ in $H$. Hence
$\sum_{i=1}^{k}\lfloor s_i/(l-1)\rfloor\ge3$, contradicting
\eqref{eq:capacity}.

Now suppose that $w\ne u$ and $w\in V(P_{s_i})$. Since
$d_T(w)=d_G(w)=3$, the vertex $w$ is internal in $P_{s_i}$.
The edges of $T$ incident with $w$ are $wu$ and the two edges of
$P_{s_i}$ incident with $w$.
Two components of $T-w$ lie in $P_{s_i}$. Together with $w$, they
form a subpath of order $2l-1$. Thus $s_i\ge2l-1$. The third component
of $T-w$ contains $u$ and $l-2$ vertices in some component $P_{s_j}$
of $H$.
If $j\ne i$, then $s_j\ge l-2$, contradicting~\eqref{eq:long}. If
$j=i$, then $P_{s_i}$ contains at least
$(2l-1)+(l-2)=3l-3$ vertices of $T$. Hence $s_i\ge3l-3$ and
$\lfloor s_i/(l-1)\rfloor\ge3$, contradicting~\eqref{eq:capacity}.
This contradiction proves that $G$ is $P_{3\cdot l}$-free.
\end{proof}

For convenience of notation, we define
\[
\begin{aligned}
H_A&=\HOP(3l-4,l-3), && l\ge4,\\
H_B&=\HOPtwo(2l-3,l-2), && l\ge3,\\
H_C&=\HOP(2l-2,l-2), && l\ge3.
\end{aligned}
\]

\begin{lemma}\label{lem:reduction}
If $l\ge3$ and
$
n\ge \max\left\{
1.27\times10^7,\,
6.5025\times 2^{3l-2},\,
(\sqrt{62+3l}+8)^2
\right\},
$
then every extremal graph for $\spex(n,P_{3\cdot l})$ is isomorphic to
$K_1\vee H_B$ or $K_1\vee H_C$, or to $K_1\vee H_A$ when $l\ge4$.
\end{lemma}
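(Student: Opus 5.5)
The plan is to use Corollary~\ref{cor:join}, which says every extremal graph has the form $G=K_1\vee H$ with $H=\bigcup_{i=1}^k P_{s_i}$ a linear forest on $n-1$ vertices. By Lemma~\ref{lem:criterion}, $G$ is $P_{3\cdot l}$-free exactly when \eqref{eq:capacity} and \eqref{eq:long} hold.

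The only tool will be the outerplanar path transformation, Lemma~\ref{lem:transfer}. If $a\ge b$ and the forest obtained from $H$ by replacing $P_a\cup P_b$ with $P_{a+1}\cup P_{b-1}$ still satisfies \eqref{eq:capacity} and \eqref{eq:long}, then $G$ is not extremal. Every component has order at most $3l-4$, since \eqref{eq:capacity} forces $\lfloor s_i/(l-1)\rfloor\le2$. Hence any donor path has $b\le 3l-4$, so $6.5025\times2^{b+2}\le 6.5025\times 2^{3l-2}\le n$ and the lemma applies. We also need enough short components to serve as donors. This follows from $n$ being large, because at most two components have order at least $l-1$, and each has order at most $3l-4$.

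I would split into cases according to the largest component $s_1$.

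\emph{Case 1: $s_1\ge 2l-1$.} By \eqref{eq:long}, every other $s_j\le l-3$. If $s_1<3l-4$, move one vertex from a short path into $P_{s_1}$; \eqref{eq:capacity} and \eqref{eq:long} still hold, which contradicts extremality. So $s_1=3l-4$. Next, if two other components satisfy $b\le a<l-3$, the move $P_a\cup P_b\to P_{a+1}\cup P_{b-1}$ keeps every short path of order at most $l-3$, so it is admissible. Hence at most one component other than $P_{3l-4}$ has order below $l-3$, and $H\cong H_A$. This case needs $l\ge4$: when $l=3$ all other components would have order $0$, which is impossible for large $n$.

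\emph{Case 2: $s_1\le 2l-2$.} Call a component long if it has order at least $l-1$. Each long component contributes $1$ or $2$ to \eqref{eq:capacity}, with $2$ exactly when its order is $2l-2$.
\begin{itemize}
\item \emph{No long component.} Two short paths can be merged toward order $l-1$ without violating \eqref{eq:capacity}, a contradiction.
\item \emph{Exactly two long components.} Neither can have order $2l-2$, since that would give a sum of at least $3$ in \eqref{eq:capacity}. If one has order below $2l-3$, feed it a vertex from a short path of order at most $l-2$. This keeps the sum equal to $2$, and \eqref{eq:long} is vacuous because nothing reaches $2l-1$. So both long components have order $2l-3$. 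Merging short paths as before forces all but one short path to have order exactly $l-2$, and $H\cong H_B$.
\item \emph{Exactly one long component.} Feeding it from short paths pushes it up to $2l-2$. The other short paths must then equal $l-2$, except for one remainder path: raising a short path to $l-1$ would make the sum $3$. This gives $H_C$. Pushing the long component further to $2l-1$ would require all other components to have order at most $l-3$, which is Case 1.
\end{itemize}

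The main obstacle is the bookkeeping. Each transformation must satisfy the hypothesis $a\ge b$ of Lemma~\ref{lem:transfer}, so the donor must never be longer than the receiver; choosing short paths as donors for long receivers guarantees this. We must also check at each step that the modified forest still satisfies both \eqref{eq:capacity} and \eqref{eq:long}. A second point to verify is the boundary configuration where a remainder path has order below $l-2$ or $l-3$. There, the remaining freedom amounts to the choice of $r$, and it is exactly what Definitions~\ref{def:HOP} and~\ref{def:HOPtwo} encode.
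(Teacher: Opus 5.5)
Your proposal is correct and follows essentially the same route as the paper. You reduce to $G=K_1\vee H$ via Corollary~\ref{cor:join} and test admissibility with Lemma~\ref{lem:criterion}. You then apply Lemma~\ref{lem:transfer} to any single move that preserves \eqref{eq:capacity} and \eqref{eq:long}, either a short donor feeding a long receiver or the merging of two short paths below the threshold, until $H$ is forced to be $H_A$, $H_C$, or $H_B$. The difference is only organizational: the paper first forces $c(H)=2$ and then splits by how that value is realized, whereas you split by $s_1$ and the number of long components, and you spell out the admissible moves more explicitly than the paper's ``repeated applications''.
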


\begin{proof}
Let $G$ be an extremal graph for $\spex(n,P_{3\cdot l})$. By
Corollary~\ref{cor:join},
$G=K_1\vee H$, where $H=\bigcup_iP_{s_i}$ is a linear forest of order
$n-1$. Define
$
c(H)=\sum_i\left\lfloor\frac{s_i}{l-1}\right\rfloor.
$
Since $G$ is $P_{3\cdot l}$-free, Lemma~\ref{lem:criterion} gives
$c(H)\le2$.

We first show that $c(H)=2$. Suppose that $c(H)<2$, and let $P_a$ be a
longest component of $H$. If $c(H)=0$, then $a\le l-2$. If $c(H)=1$,
then $a\le2l-3$, and every component other than $P_a$ has order at
most $l-2$. Since $n-1>4(l-1)$, the forest $H$ has another nonempty
component, say $P_b$, where $a\ge b\ge1$. Replacing
$P_a\cup P_b$ with $P_{a+1}\cup P_{b-1}$ (with $P_0$ omitted when $b=1$)
gives a linear forest $H'$ with $c(H')\le2$ and every component of order at
most $2l-2$.
Lemma~\ref{lem:criterion} shows that $K_1\vee H'$ remains
$P_{3\cdot l}$-free, whereas Lemma~\ref{lem:transfer} gives
$\rho(K_1\vee H')>\rho(G)$, a contradiction. Therefore, $c(H)=2$.

Suppose that some component of $H$ has order at least $2l-1$.
By Lemma~\ref{lem:criterion}, its order is at most $3l-4$, and every
other component has order at most $l-3$. Thus $l\ge4$.
Repeated applications of Lemma~\ref{lem:transfer} transform $H$ into
$
 P_{3l-4}\cup qP_{l-3}\cup P_r,
 \quad q\ge0,\quad 0\le r<l-3.
$
By Lemma~\ref{lem:criterion}, every intermediate join is
$P_{3\cdot l}$-free. If $H$ does not already have this form,
Lemma~\ref{lem:transfer} gives a graph with spectral radius greater
than $\rho(G)$, a contradiction. Hence
$
 H=H_A=\HOP(3l-4,l-3),
$

Suppose next that no component has order greater than $2l-2$ and
that one component contributes two to $c(H)$. Its order is exactly
$2l-2$. Repeated applications of Lemma~\ref{lem:transfer} transform
$H$ into
$
 P_{2l-2}\cup qP_{l-2}\cup P_r,
  \quad q\ge0,\quad 0\le r<l-2.
$
By Lemma~\ref{lem:criterion}, every intermediate join remains
$P_{3\cdot l}$-free. If $H$ does not already have this form,
Lemma~\ref{lem:transfer} produces a graph with spectral radius greater
than $\rho(G)$, a contradiction. Hence
$
H=H_C=\HOP(2l-2,l-2).
$

Finally, suppose that two components each contribute one to $c(H)$.
Their orders lie between $l-1$ and $2l-3$. Every other component has
order at most $l-2$. Since $n-1>4l-6$, we may apply
Lemma~\ref{lem:transfer} repeatedly to obtain
$
 2P_{2l-3}\cup qP_{l-2}\cup P_r,
  \quad q\ge0,\quad 0\le r<l-2.
$
By Lemma~\ref{lem:criterion}, every intermediate join remains
$P_{3\cdot l}$-free. If $H$ does not already have this form,
Lemma~\ref{lem:transfer} produces a graph with spectral radius greater
than $\rho(G)$, a contradiction. Hence
$
H=H_B=\HOPtwo(2l-3,l-2).
$

Consequently, $G$ is isomorphic to $K_1\vee H_B$ or
$K_1\vee H_C$, or to $K_1\vee H_A$ when $l\ge4$.
\end{proof}

\begin{lemma}\label{lem:gain}
Suppose that $n\ge1.27\times10^7$, and let $H$ and $H'$ be linear forests
satisfying $V(H)=V(H')$ and $|V(H)|=n-1$. Define
$d=|E(H)\setminus E(H')|$ and $\delta=e(H')-e(H)$. If
$\rho=\rho(K_1\vee H)>36$ and $\delta\rho>4.2d$, then
$\rho(K_1\vee H')>\rho(K_1\vee H)$.
\end{lemma}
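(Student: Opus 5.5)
Let $G=K_1\vee H$ and $G'=K_1\vee H'$ on the common vertex set $V(H)\cup\{u\}$, where $u$ is the dominating vertex. Let $\boldsymbol{x}$ be the positive Perron vector of $G$, normalized so that $\max_v x_v=1$. The first step is to check that $x_u=1$. Summing the eigenequations of the vertices of $H$, each of which has degree at most $3$ in $G$, and comparing with $\rho x_u=\sum_{v\in V(H)}x_v$ shows that $u$ carries the maximum entry. This normalization is exactly the one under which Lemma~\ref{lem:coordinates} applies, so for every $v\in V(H)$ we have
\[
\frac1\rho\le x_v\le \frac1\rho+\frac{2.04}{\rho^2}.
\]

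Next I apply the edge comparison inequality \eqref{eq:edgecomparison}. The edges at $u$ are common to $G$ and $G'$, so only the edges of $H$ and $H'$ enter. The deleted edges are $E(H)\setminus E(H')$, and there are $d$ of them. The added edges are $E(H')\setminus E(H)$, and since $|E(H')|=|E(H)|+\delta$ there are exactly $d+\delta$ of them. Using the lower bound $1/\rho^2$ for each product on an added edge and the upper bound $(1/\rho+2.04/\rho^2)^2$ for each product on a deleted edge, it suffices to show
\[
\frac{d+\delta}{\rho^2}-d\Bigl(\frac1\rho+\frac{2.04}{\rho^2}\Bigr)^2>0 .
\]
Multiplying by $\rho^2$, this becomes
\[
\delta> d\Bigl(\frac{4.08}{\rho}+\frac{2.04^2}{\rho^2}\Bigr).
\]
Multiplying again by $\rho$, it is enough to have $\delta\rho>d\,(4.08+4.1616/\rho)$.

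The main point is to absorb the error term, and this is where $\rho>36$ is used: $4.1616/36<0.12$, so $4.08+4.1616/\rho<4.2$. Hence the hypothesis $\delta\rho>4.2d$ gives the strict positivity of the bracket in \eqref{eq:edgecomparison}, and therefore $\rho(G')>\rho(G)$.

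One degenerate case needs checking. If $d=0$, then $E(H)\subseteq E(H')$, and the hypothesis forces $\delta>0$, so $H'$ has strictly more edges. In that case $G$ is a proper spanning subgraph of the connected graph $G'$, and Lemma~\ref{lem:standard}(iii) gives the conclusion directly. The displayed inequality also covers this case, since then $(d+\delta)/\rho^2=\delta/\rho^2>0$. The case $\delta\le 0$ with $d>0$ cannot occur, because the hypothesis requires $\delta\rho>4.2d>0$.
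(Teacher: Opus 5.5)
Your proof is correct and follows the paper's argument: you use the coordinate bounds of Lemma~\ref{lem:coordinates} and the edge-comparison inequality \eqref{eq:edgecomparison} with $d+\delta$ added and $d$ deleted edges, and then $\rho>36$ absorbs the $2.04^2/\rho^4$ term, so that $(1/\rho+2.04/\rho^2)^2<1/\rho^2+4.2/\rho^3$. Your preliminary check that $x_u=1$ and your separate treatment of $d=0$ are harmless but unnecessary, since Lemma~\ref{lem:coordinates} applies directly to the max-normalized Perron vector and the main inequality already covers $d=0$.
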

\begin{proof}
Let $u$ be the vertex of degree $n-1$ in $K_1\vee H$. Let $\boldsymbol{x}$ be a
positive eigenvector corresponding to $\rho(K_1\vee H)$ with
$\max_{v\in V(K_1\vee H)}x_v=1$. Since $\rho>36$,
Lemma~\ref{lem:coordinates} gives, for any $v,w\in V(H)$,
\[
\frac{1}{\rho^2}\le x_vx_w
\le\left(\frac{1}{\rho}+\frac{2.04}{\rho^2}\right)^2
<\frac{1}{\rho^2}+\frac{4.2}{\rho^3}.
\]

There are $d+\delta$ edges in $E(H')\setminus E(H)$ and $d$ edges in
$E(H)\setminus E(H')$. Applying \eqref{eq:edgecomparison} to
$K_1\vee H$ and $K_1\vee H'$, we obtain
\begin{align*}
\rho(K_1\vee H')-\rho(K_1\vee H)
&\ge
\frac{2}{\boldsymbol{x}^{\mathrm T}\boldsymbol{x}}
\left(
\sum_{vw\in E(H')\setminus E(H)}x_vx_w
-
\sum_{vw\in E(H)\setminus E(H')}x_vx_w
\right)\\
&\ge
\frac{2}{\boldsymbol{x}^{\mathrm T}\boldsymbol{x}}
\left[
\frac{d+\delta}{\rho^2}
-d\left(\frac{1}{\rho^2}+\frac{4.2}{\rho^3}\right)
\right]\\
&=
\frac{2(\delta\rho-4.2d)}
{\rho^3\boldsymbol{x}^{\mathrm T}\boldsymbol{x}}
>0.
\end{align*}
This proves the lemma.
\end{proof}

\begin{lemma}\label{lem:candidate-comparison}
If $l\ge3$ and
$
n\ge \max\left\{
1.27\times10^7,\,
6.5025\times 2^{3l-2},\,
(\sqrt{62+3l}+8)^2
\right\},
$
then $\rho(K_1\vee H_B)>\rho(K_1\vee H_C)$. Moreover, if
$l\ge4$, then $\rho(K_1\vee H_B)>\rho(K_1\vee H_A)$.
\end{lemma}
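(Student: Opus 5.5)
The plan is to use Lemma~\ref{lem:gain} directly for the comparison with $H_C$. For the comparison with $H_A$, I would combine Lemma~\ref{lem:gain} with a chain of applications of Lemma~\ref{lem:transfer}. Throughout, $\rho\ge\sqrt{n-1}>36$ holds for every graph $K_1\vee H$ considered, since each contains $K_{1,n-1}$. So the hypothesis $\rho>36$ of Lemma~\ref{lem:gain} is automatic. It then suffices to exhibit a vertex identification with $\delta\ge1$ and $d$ small, because $\rho>4.2$ gives $\delta\rho>4.2d$ whenever $\delta\ge d$.

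\emph{$H_B$ versus $H_C$.} The vertices of $H_C$ outside $P_{2l-2}$ number $(n-1)-(2l-2)=(n-1)-(4l-6)+2(l-2)$. Since $2(l-2)$ is a multiple of $l-2$, the remainder $r$ is the same in $H_B$ and $H_C$. Hence $H_C$ is obtained from $H_B$ by replacing $2P_{2l-3}$ with $P_{2l-2}\cup 2P_{l-2}$, and both of these forests have $4l-6$ vertices. I identify all common components. Label the two long paths of $H_B$ as $a_1\cdots a_{2l-3}$ and $b_1\cdots b_{2l-3}$. In $H_C$, take $P_{2l-2}=a_1\cdots a_{2l-3}b_1$, and take the two copies of $P_{l-2}$ to be $b_2\cdots b_{l-1}$ and $b_l\cdots b_{2l-3}$. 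Then $E(H_C)\setminus E(H_B)=\{a_{2l-3}b_1\}$, so $d=1$. Also $E(H_B)\setminus E(H_C)=\{b_1b_2,\,b_{l-1}b_l\}$, so $\delta=e(H_B)-e(H_C)=1$. Lemma~\ref{lem:gain}, applied with $H=H_C$ and $H'=H_B$, gives $\rho(K_1\vee H_B)>\rho(K_1\vee H_C)$. The case $l=3$ is included, since then each $P_{l-2}$ is a single vertex.

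\emph{$H_B$ versus $H_A$ for $l\ge4$.} Here the small components have different orders, so a single identification would produce $d$ of order $n/l$. Instead I build an intermediate forest $H'$ from $H_A$ by three moves:
\begin{itemize}
\item delete one edge of $P_{3l-4}$, splitting it into $P_{2l-3}\cup P_{l-1}$;
\item join two pairs of $P_{l-3}$'s end to end, obtaining two copies of $P_{2l-6}$.
\end{itemize}
This gives $d=1$ and $\delta=1$, so $\rho(K_1\vee H')>\rho(K_1\vee H_A)$ by Lemma~\ref{lem:gain}. The forest $H'$ still has at least as many components as $H_B$, since $H_A$ has about $n/(l-3)$ components and $H_B$ about $n/(l-2)$. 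Moreover, no component of $H'$ has order exceeding $2l-3$.

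It remains to pass from $H'$ to $H_B$ by a sequence of moves $P_a\cup P_b\to P_{a+1}\cup P_{b-1}$ with $a\ge b$. Each move strictly increases $\rho$ by Lemma~\ref{lem:transfer}. The required bound $n\ge 6.5025\cdot2^{b+2}$ holds because $b\le 3l-4$ and $n\ge 6.5025\cdot 2^{3l-2}$. Intermediate forests need not be $P_{3\cdot l}$-free, since only the endpoints matter. Moves with $b=1$ delete a component, which is allowed.

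The main obstacle is to verify that such a sequence exists, that is, that $H_B$ is reachable from $H'$ by moves that never decrease a larger part. Equivalently, the partition of $n-1$ given by the component orders of $H_B$ must majorize that of $H'$. I would check this by comparing partial sums of the sorted parts. Both forests have two largest parts of total size at most $2(2l-3)$, with equality for $H_B$; after these, $H_B$ has parts $l-2$ against parts of size at most $l-1$, $2l-6$, $l-3$ in $H'$. A slight concern is that $2l-6$ exceeds $l-2$ for $l>4$. If that breaks the majorization, I would instead merge the $P_{l-3}$'s with single vertices taken from other $P_{l-3}$'s. This requires more edge additions but keeps $d$ bounded: with $d$ deletions I can make $\delta=d+1$, and Lemma~\ref{lem:gain} still applies. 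The goal is an $H'$ whose parts beyond the top two are all at most $l-2$. Then majorization by $H_B$, whose tail is $l-2,\dots,l-2,r$, follows from the standard fact that the greedy partition into parts $l-2$ majorizes every partition with parts at most $l-2$.
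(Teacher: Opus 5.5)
Your comparison of $H_B$ with $H_C$ is correct: it has $d=\delta=1$ and is essentially the paper's exchange of one edge for two. The gap is in the $H_A$ case. Your primary intermediate forest $H'=P_{2l-3}\cup P_{l-1}\cup 2P_{2l-6}\cup\cdots$ is not dominated by $H_B$ once $l\ge7$. Its four largest parts sum to $(2l-3)+2(2l-6)+(l-1)=7l-16$, while those of $H_B$ sum to $6l-10$. Each move of Lemma~\ref{lem:transfer} goes up in dominance order, so $H_B$ cannot be reached from $H'$. The primary plan therefore fails for every $l\ge7$, and the argument rests entirely on the fallback.

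The fallback's key claim, that you can make all tail parts at most $l-2$ with $\delta=d+1$, is false for large $l$. Cutting a vertex off one $P_{l-3}$ and attaching it to another costs one deletion per addition, so the net $\delta$ is $0$. An intact $P_{l-3}$ in the tail can absorb only one extra vertex. Hence lowering the number of components requires dismantling whole copies of $P_{l-3}$, at a cost of $l-4$ deletions per unit of gain.

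For instance, split $P_{3l-4}$ into $P_{2l-3}\cup P_{l-1}$, dismantle two copies of $P_{l-3}$, and attach their $2(l-3)$ vertices to $2(l-3)$ other copies. This gives $\delta=1$ but $d=2l-7$. In general, each unit of $\delta$ needs roughly $l-2$ touched copies of $P_{l-3}$, while each deletion lets you touch only boundedly many. So the step ``$\delta\ge d$, hence $\rho>4.2$ suffices'' is unavailable.

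The missing ingredient is a lower bound on $\rho$ in terms of $l$. You need $\rho>4.2(2l-7)$, which does follow from $\rho\ge\sqrt{n-1}$ and $n\ge 6.5025\cdot 2^{3l-2}$. With this repair your route works. The repaired $H'$ has parts $2l-3$ and $l-1$ and all other parts at most $l-2$, so it is dominated by $H_B$. Along the transfer chain all parts stay at most $2l-3$, so the hypothesis of Lemma~\ref{lem:transfer} holds.

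Your reason for avoiding a direct comparison was also mistaken. The paper applies Lemma~\ref{lem:gain} to $H_A$ and $H_B$ themselves. There $d\le n-1$, but $\delta=k_A-k_B\ge 3(n-1)/\bigl(4(l-2)(l-3)\bigr)$ is linear in $n$ as well. So $\delta\rho>4.2d$ only needs $\rho$ to exceed a constant times $(l-2)(l-3)$. This uses the same fact your sketch is missing: $\rho$ is huge compared with $l$.
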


\begin{proof}
We first compare $H_B$ and $H_C$.
Write $n-1=4l-6+q(l-2)+r$, where $0\le r<l-2$. Then
\[
\begin{aligned}
 H_B&=\HOPtwo(2l-3,l-2)
 =2P_{2l-3}\cup qP_{l-2}\cup P_r,\\
 H_C&=\HOP(2l-2,l-2)
 =P_{2l-2}\cup(q+2)P_{l-2}\cup P_r.
\end{aligned}
\]
In $H_C$, choose a component $P_{2l-2}$ and two components $P_{l-2}$.
Let $z$ be an end vertex of $P_{2l-2}$ and let $w$ be its neighbor.
Let $a$ and $b$ be end vertices of the two copies of $P_{l-2}$,
respectively. Then $H_B$ is obtained from $H_C$ by deleting the edge
$zw$ and adding the edges $az$ and $bz$.

Let $\boldsymbol{x}$ be a positive eigenvector corresponding to
$\rho(K_1\vee H_C)$ with
$\max_{v\in V(K_1\vee H_C)}x_v=1$, and let
$\rho=\rho(K_1\vee H_C)$. Since
$K_1\vee H_C$ contains $K_{1,n-1}$, we have
$\rho\ge\sqrt{n-1}>36$. By \eqref{eq:edgecomparison} and
Lemma~\ref{lem:coordinates},
\begin{align*}
\rho(K_1\vee H_B)-\rho(K_1\vee H_C)
&\ge
\frac{2x_z(x_a+x_b-x_w)}
{\boldsymbol{x}^{\mathrm T}\boldsymbol{x}}\\
&\ge
\frac{2x_z}{\boldsymbol{x}^{\mathrm T}\boldsymbol{x}}
\left[
\frac{2}{\rho}
-\left(\frac{1}{\rho}+\frac{2.04}{\rho^2}\right)
\right]\\
&=
\frac{2x_z(\rho-2.04)}
{\rho^2\boldsymbol{x}^{\mathrm T}\boldsymbol{x}}
>0.
\end{align*}
Therefore, $\rho(K_1\vee H_B)>\rho(K_1\vee H_C)$.

We now compare $H_B$ and $H_A$. Assume $l\ge4$. Let $k_A$ and $k_B$
denote the numbers of components of $H_A$ and $H_B$, respectively. Then
\[
 k_A=1+\left\lceil\frac{n-3l+3}{l-3}\right\rceil,
 \qquad
 k_B=2+\left\lceil\frac{n-4l+5}{l-2}\right\rceil.
\]
A forest of order $n-1$ with $k$ components has $n-1-k$ edges.
Using $z\le\lceil z\rceil\le z+1$ and setting $c=(l-3)(l-2)$, we obtain
\begin{align*}
\delta=e(H_B)-e(H_A)=k_A-k_B
&\ge
-2+\frac{n-3l+3}{l-3}
-\frac{n-4l+5}{l-2}\\
&=
\frac{n-(l-1)^2-2}{c}\\
&\ge
\frac{3(n-1)}{4c}.
\end{align*}
 The last inequality follows because the lower bound on $n$ in
Theorem~\ref{thm:main} implies
$n-1\geq4\bigl((l-1)^2+1\bigr).$

Since $|V(H_A)|=|V(H_B)|=n-1$, we may assume that $V(H_A)=V(H_B)$. Let
$\rho=\rho(K_1\vee H_A)$ and $d=|E(H_A)\setminus E(H_B)|.$
Then $d\le e(H_A)\le n-1$. Since $K_1\vee H_A$ contains
$K_{1,n-1}$, we have $\rho\ge\sqrt{n-1}$.
The lower bound on $n$ in
Theorem~\ref{thm:main} gives
$n-1\ge\bigl(8(l-1)^2+4\bigr)^2$. Hence
$\rho\ge8(l-1)^2+4$. Using the above bound on $\delta$,
\[
 \delta\rho-4.2d
 \ge (n-1)\left(\frac{3\rho}{4c}-4.2\right)>0,
\]
where the strict inequality follows from
\[
 3\rho-16.8c
 \ge3(8(l-1)^2+4)-16.8(l-3)(l-2)
 =7.2l^2+36l-64.8>0.
\]
Lemma~\ref{lem:gain} now gives
$
\rho(K_1\vee H_B)>\rho(K_1\vee H_A).
$
\end{proof}

\begin{proof}[Proof of Theorem~\ref{thm:main}]
By Definition~\ref{def:HOPtwo},
$
H_B=\HOPtwo(2l-3,l-2).
$
Moreover,
$
\sum_{P_s\in H_B}
\left\lfloor\frac{s}{l-1}\right\rfloor=2,
$
and every component of $H_B$ has order at most $2l-3$. Thus,
Lemma~\ref{lem:criterion} implies that $K_1\vee H_B$ is
$P_{3\cdot l}$-free.

Let $G$ be an extremal graph for $\spex(n,P_{3\cdot l})$.
By Lemma~\ref{lem:reduction}, $G$ is isomorphic to
$K_1\vee H_B$, $K_1\vee H_C$, or, when $l\geq4$, $K_1\vee H_A$.
Lemma~\ref{lem:candidate-comparison} shows that
$K_1\vee H_B$ has strictly larger spectral radius than each of the
other possible candidates. Therefore,
$
K_1\vee\HOPtwo(2l-3,l-2)
$
is the unique extremal graph for $\spex(n,P_{3\cdot l})$.
\end{proof}

\section*{Data availability}

No data was used for the research described in the article.

\section*{Declaration of competing interest}

The authors declare that they have no conflict of interest.


\small
\begin{thebibliography}{99}

\bibitem{BollobasNikiforov}
B. Bollob\'{a}s, V. Nikiforov,
Cliques and the spectral radius,
J. Combin. Theory Ser. B 97 (2007) 859--865.

\bibitem{BondyMurty}
J. Bondy, U. Murty,
Graph Theory, Graduate Texts in Mathematics, vol. 244,
Springer, London, 2008.

\bibitem{BootsRoyle}
B. Boots, G. Royle,
A conjecture on the maximum value of the principal eigenvalue of a planar graph,
Geogr. Anal. 23 (1991) 276--282.


\bibitem{BrualdiSolheid}
R. Brualdi, E. Solheid,
On the spectral radius of complementary acyclic matrices of zeros and ones,
SIAM J. Algebraic Discrete Methods 7 (2) (1986) 265--272.

\bibitem{CaoVince}
D. Cao, A. Vince,
The spectral radius of a planar graph,
Linear Algebra Appl. 187 (1993) 251--257.

\bibitem{CvetkovicRowlinson}
D. Cvetkovi\'{c}, P. Rowlinson,
The largest eigenvalue of a graph: a survey,
Linear Multilinear Algebra 28 (1--2) (1990) 3--33.

\bibitem{ChenLiuZhangLinear}
M. Chen, A. Liu, X. Zhang,
Spectral extremal results with forbidding linear forests,
Graphs Combin. 35 (2019) 335--351.

\bibitem{ChenLiuZhangStar}
M. Chen, A. Liu, X. Zhang,
On the spectral radius of graphs without a star forest,
Discrete Math. 344 (4) (2021) 112269.

\bibitem{ChenZhangSurvey}
M. Chen, X. Zhang,
Some new results and problems in spectral extremal graph theory (in Chinese),
J. Anhui Univ. Nat. Sci. 42 (1) (2018) 12--25.

\bibitem{CioabaFengTaitZhang}
S. Cioab\u{a}, L. Feng, M. Tait, X. Zhang,
The maximum spectral radius of graphs without friendship subgraphs,
Electron. J. Combin. 27 (4) (2020) P4--22.



\bibitem{FangLinShi}
L. Fang, H. Lin, Y. Shi,
Extremal spectral results of planar graphs without vertex-disjoint cycles,
J. Graph Theory 106 (3) (2024) 496--524.

\bibitem{FengYuZhang}
L. Feng, G. Yu, X. Zhang,
Spectral radius of graphs with given matching number,
Linear Algebra Appl. 422 (2007) 133--138.


\bibitem{Felsner}
S. Felsner,
Geometric Graphs and Arrangements: Some Chapters from Combinatorial Geometry,
Vieweg, Wiesbaden, 2004.

\bibitem{LinNing}
H. Lin, B. Ning,
A complete solution to the Cvetkovi\'{c}--Rowlinson conjecture,
J. Graph Theory 97 (2021) 441--450.

\bibitem{LiFeng}
Q. Li, K. Feng, On the largest eigenvalue of a graph, Acta Math. Appl. Sin. 2 (2) (1979) 167--175.

\bibitem{LiLiuFengSurvey}
Y. Li, W. Liu, L. Feng,
A survey on spectral conditions for some extremal graph problems,
Adv. Math. (China) 51 (2) (2022) 193--258.

\bibitem{NikiforovPaths}
V. Nikiforov,
The spectral radius of graphs without paths and cycles of specified length,
Linear Algebra Appl. 432 (9) (2010) 2243--2256.

\bibitem{NikiforovSurvey}
V. Nikiforov, Some new results in extremal graph theory, Surveys in Combinatorics, Lond. Math. Soc. Lect. Note Ser. 392 (2011) 141--181.


\bibitem{SchwenkWilson}
A. Schwenk, R. Wilson,
On the eigenvalues of a graph,
in: L. Beineke, R. Wilson (Eds.),
Selected Topics in Graph Theory,
Academic Press, London, 1978, pp. 307--336.

\bibitem{TaitTobin}
M. Tait, J. Tobin,
Three conjectures in extremal spectral graph theory,
J. Combin. Theory Ser. B 126 (2017) 137--161.

\bibitem{WangHuangLin}
X. Wang, X. Huang, H. Lin,
On the spectral extremal problem of planar graphs,
Comput. Appl. Math. 44 (8) (2025) 411.

\bibitem{YinLi}
X. Yin, D. Li,
Spectral extremal problems on outerplanar and planar graphs,
Discrete Math. 349 (1) (2026) 114673.

\bibitem{YinLiMeng}
X. Yin, D. Li, J. Meng,
Planar and outerplanar spectral extremal problems based on paths,
arXiv preprint arXiv:2504.04364.

\bibitem{ZhaiLin}
M. Zhai, H. Lin,
Spectral extrema of graphs: forbidden hexagon,
Discrete Math. 343 (10) (2020) 112028.

\bibitem{ZhaiLiu}
M. Zhai, M. Liu,
Extremal problems on planar graphs without $k$ edge-disjoint cycles,
Adv. Appl. Math. 157 (2024) 102701.

\bibitem{ZhaoHuangLin}
Y. Zhao, X. Huang, H. Lin,
The maximum spectral radius of wheel-free graphs,
Discrete Math. 344 (5) (2021) 112341.

\end{thebibliography}
\end{document}